\newtheorem{thm}{Theorem}[section]
\newtheorem{prop}[thm]{Proposition}
\theoremstyle{definition}
\newtheorem{definition}[thm]{Definition}
\theoremstyle{remark}
\newtheorem{remark}[thm]{Remark}
\title[A $3\times 3$ singular solution to the Matrix Bochner Problem]{A $3\times 3$ singular solution to the Matrix Bochner Problem with $\mathcal{D}(W)$ not of the form $\mathbb{C}[D]$}
\author{Ignacio Bono Parisi}
\subjclass[2020]{33C45, 42C05, 34L05, 34L10}
\thanks{This paper was partially supported by SeCyT-UNC, PIP 33620230100819CB, CONICET, PIP 1220150100356.}
\keywords{ Matrix-valued orthogonal polynomials, matrix Bochner problem, Darboux transformations, discrete-continuous bispectrality, matrix-valued bispectral functions}
\address{CIEM-FaMAF\\ Universidad Nacional de C\'or\-do\-ba\\
CP 5000, C\'or\-do\-ba,  Argentina}
\email{ignacio.bono@unc.edu.ar}
\begin{document}
\begin{abstract}
    The Matrix Bochner Problem aims to classify weight matrices whose sequences of orthogonal polynomials are eigenfunctions of a second-order differential operator. A major breakthrough in this direction was achieved in \cite{CY18}, where it was shown that, under certain natural conditions on the algebra \(\mathcal{D}(W)\), all solutions arise from Darboux transformations of direct sums of classical scalar weights. In this paper, we study a new \(3 \times 3\) Hermite-type weight matrix and determine its algebra \(\mathcal{D}(W)\) as a \(\mathbb{C}[D_1]\)-module generated by \(\{I, D_2\}\), where \(D_{1}\) and \(D_{2}\) are second-order differential operators. This complete description of the algebra allows us to prove that the weight does not arise from a Darboux transformation of classical scalar weights, showing that it falls outside the classification theorem of \cite{CY18}. Unlike previous examples in \cites{BP23,BP24-1}, which also do not fit within this classification, the algebra \(\mathcal{D}(W)\) of this weight matrix is not generated by a single differential operator $D$, making it a fundamentally different case. These results complement the classification theorem of the Matrix Bochner Problem by providing a new type of singular example.
\end{abstract} 
\maketitle

\section{Introduction}
One of the central problems in the theory of matrix orthogonal polynomials is the 
so-called \emph{Matrix Bochner Problem}. This problem concerns the characterization 
of weight matrices $W(x)$ for which the associated sequence of orthogonal polynomials 
satisfies a second-order matrix differential equation. This problem is particularly relevant due to its deep connections with integrable systems, representation theory, and spectral analysis.

The origin of this problem traces back to the scalar case, where S. Bochner \cite{B29} posed and solved the problem, proving that, up to an affine change of coordinates, the classical weights of Hermite, Laguerre, and Jacobi are the only solutions. Later, A. Durán \cite{D97} extended this 
framework to the matrix setting, leading to the so-called Matrix Bochner Problem. While 
the scalar case is completely understood, its matrix counterpart remains open and presents 
rich mathematical challenges, with significant implications for the study of matrix-valued 
special functions and their applications in mathematical physics and approximation theory.

The first nontrivial solutions of the Matrix Bochner Problem were built from matrix spherical functions \cites{GPT01,GPT02,GPT03a} and direct methods \cite{DG04}. Since then, over the past twenty years, many more examples have been discovered. 
See \cites{GPT05, CG06, DG07, PT07, DdI08, DdI08b, DG05, CMV05, CMV07, P08, PR08, PZ16, KPR12, KRR17}.

\smallskip

A major breakthrough in addressing the Matrix Bochner Problem was achieved by R. Casper and M. Yakimov in \cite{CY18}. They characterized all solutions to the Matrix Bochner Problem that satisfy some natural extra hypotheses on the algebra $\mathcal{D}(W)$ of all differential operators that have a sequence of matrix orthogonal polynomials for $W$ as eigenfunctions. They established the following classification theorem:
\begin{thm}\label{clas1}
(\cite{CY18}, Theorem 1.3). Let $W(x)$ be a weight matrix and suppose that $\mathcal{D}(W)$ contains a $W$-symmetric second-order differential operator
$D = \partial^{2}G_{2}(x) + \partial G_{1}(x) + G_{0}(x)$,
with $G_{2}(x)W(x)$ positive-definite on the support of $W(x)$. 
Then  $W(x)$ is a noncommutative bispectral Darboux transformation of a direct sum of classical weights if and only if the algebra $\mathcal{D}(W)$ is full.
\end{thm}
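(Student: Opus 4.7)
The plan is to prove the two implications separately; the \emph{if} direction is essentially structural, while the \emph{only if} direction carries the substantive content and is the main obstacle.

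For the \emph{if} direction, suppose $W$ is obtained from a direct sum $\widetilde{W} = w_{1} \oplus \cdots \oplus w_{n}$ of classical scalar weights by a noncommutative bispectral Darboux transformation. Since each scalar classical weight satisfies $\mathcal{D}(w_{i}) = \mathbb{C}[D_{w_{i}}]$, the algebra $\mathcal{D}(\widetilde{W})$ is a finitely generated module over a large central subalgebra and is manifestly \emph{full}. Fullness should then be transported to $\mathcal{D}(W)$ via conjugation by the Darboux intertwiners, since the intertwiners establish a Morita-like equivalence of modules that preserves the relevant finite-generation and centrality conditions defining fullness.

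For the \emph{only if} direction, the strategy is to exploit bispectrality. Because $D \in \mathcal{D}(W)$ is second-order with $G_{2}W$ positive-definite on $\mathrm{supp}(W)$, one may realize $D$ as a $W$-symmetric operator on the weighted $L^{2}$ space and set up its duality with the three-term recurrence operator acting on the degree index. This yields a Fourier algebra encoding the interplay between $\mathcal{D}(W)$ and the difference algebra of the orthogonal polynomials. Fullness of $\mathcal{D}(W)$ then translates into a strong structural condition on this Fourier algebra: enough commuting central elements exist to decompose the difference side into scalar bispectral components.

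Each such scalar component is a bispectral weight in the classical sense and, by Bochner's theorem, must be Hermite, Laguerre, or Jacobi up to an affine change of variable; assembling them gives the candidate source $\widetilde{W}$. The Darboux intertwiners between $W$ and $\widetilde{W}$ are then constructed by factoring a distinguished element of $\mathcal{D}(W)$ into two matrix-valued pieces $U$ and $V$, with $W$ and $\widetilde{W}$ corresponding to the two orders of composition $VU$ and $UV$. I expect the hardest step to be exactly this noncommutative factorization: matrix factorizations are highly non-unique, and one must ensure compatibility with both the bispectral pairing and the positivity of $W$. Controlling this compatibility is where the hypothesis of fullness is used in the most essential way, and is presumably where the bulk of the technical machinery of \cite{CY18}, including the careful analysis of right Fourier algebras and the noncommutative analogue of the bispectral involution, is concentrated.
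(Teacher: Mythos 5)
The statement you were asked to prove is not proved in this paper at all: it is quoted verbatim as Theorem 1.3 of \cite{CY18} and used as a black box throughout, so there is no internal argument to compare yours against. Judged as a standalone proof of the Casper--Yakimov classification theorem, what you have written is an outline, not a proof, and the outline defers exactly the hard steps to the reference whose theorem it is meant to establish (``I expect the hardest step to be\dots'', ``presumably where the bulk of the technical machinery of \cite{CY18}\dots is concentrated''). A minor point first: your labels are swapped --- in the statement, the ``if'' direction is ``$\mathcal{D}(W)$ full $\Rightarrow$ Darboux'' and the ``only if'' direction is ``Darboux $\Rightarrow$ full'' --- though you do address both implications.

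The genuine gaps are these. For ``Darboux $\Rightarrow$ full'' you assert that $\mathcal{D}(\widetilde{W})$ is ``manifestly full'' for a direct sum of classical scalar weights and that fullness ``should then be transported'' through the Darboux intertwiners by a ``Morita-like equivalence''; neither claim is immediate. The algebra of a direct sum can be strictly larger than the direct sum of the scalar algebras (off-diagonal operators occur when the summands are related), and the invariance of fullness under noncommutative bispectral Darboux transformation is itself one of the central theorems of \cite{CY18}, proved via a detailed analysis of the left and right Fourier algebras and the adjoint operation --- it cannot simply be invoked. For ``full $\Rightarrow$ Darboux'' your sketch jumps from the existence of the operators $E_{1},\dots,E_{N}$ with $E_{i}E_{j}=0$ to ``scalar bispectral components'' and Bochner's theorem, omitting the actual content: one needs the degree/filtration and localization arguments showing that $\mathcal{D}(W)$ is a finitely generated module over its center and that the center is large enough, the construction and control of the noncommutative factorization $VU$ versus $UV$, and the verification that the resulting scalar weights are honest classical weights and that the transformation is bispectral. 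In particular, the hypothesis that $W$ admits a $W$-symmetric second-order operator with $G_{2}(x)W(x)$ positive definite --- which is essential in \cite{CY18} precisely at this analytic step --- is never actually used anywhere in your argument, which is a reliable sign that the proof as sketched could not be completed along the lines indicated without importing the full machinery of the cited work.
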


The algebra $\mathcal{D}(W)$ is said to be full if its module rank coincides with the size of the weight. Equivalently, for a weight matrix $W$ of size $N$, the algebra $\mathcal{D}(W)$ is full if and only if there exist nonzero $W$-symmetric differential operators $D_{1},\ldots,D_{N} \in \mathcal{D}(W)$ such that $D_{i}D_{j} = 0$, for all $i\not=j$, and $D_{1} + \cdots + D_{N}$ is a central element in $\mathcal{D}(W)$ which is not a zero divisor.

These additional hypotheses on the algebra $\mathcal{D}(W)$ are natural enough to encompass almost all known examples in the literature. As a complementary development, recent works \cites{BP23,BP24-1} further explore this setting. In particular, \cite{BP23} analyzes a previously known $2\times 2$ Hermite-type weight and proves that its algebra is a polynomial algebra over a second-order differential operator, hence $\mathcal{D}(W)$ is not full and the weight falls outside the classification. Subsequently, \cite{BP24-1} constructed new examples of arbitrary size $N\times N$ for all three classical types (Hermite, Laguerre, and Jacobi) with the same property. These results show that the algebras $\mathcal{D}(W)$ of these weights do not satisfy the fullness condition and, consequently, do not arise as Darboux transformations of direct sums of classical scalar weights. These new constructions further enrich the landscape of solutions to the Matrix Bochner Problem, complementing the classification of R. Casper and M. Yakimov.

\ 

In this paper, we present a new solution to the Matrix Bochner Problem that falls outside the classification given in \cite{CY18}. This example provides another instance where the algebra $\mathcal{D}(W)$ does not satisfy the fullness condition. Moreover, unlike previously known non-full cases, its algebra is not generated by a single differential operator. This further expands the landscape of solutions, showing that the non-full property is not exclusive to polynomial algebras. 

The solution we present is the $3 \times 3$ Hermite-type weight matrix 
    \[ W(x) = e^{-x^{2}} \begin{psmallmatrix} e^{2bx} + a^{2}x^{2} && ax && acx^{2} \\ ax && 1 && cx \\ acx^{2} && cx && c^{2}x^{2} + 1  \end{psmallmatrix}.\]
 The algebra $\mathcal{D}(W)$ contains two $W$-symmetric second-order differential operators $D_{1}$ and $D_{2}$, as defined in \eqref{D1D2}, that satisfy the relations $D_{1}D_{2} = D_{2}D_{1}$, and $D_{2}D_{1} = D_{2}^{2} + \frac{2(c^{2}+2)}{c^{2}}(D_{1}-D_{2})$. The main theorems of this paper are Theorem \ref{algebra D(W)}, in which we establish that the algebra $\mathcal{D}(W)$ is generated as a $\mathbb{C}[D_{1}]$-module by $\{I,D_{2}\}$, and Theorem \ref{no darboux}, where we prove that $W$ is not a Darboux transformation of any direct sum of classical scalar weights. 

\

The paper is organized as follows. In Section \ref{section 2}, we recall the notions of weight matrices, orthogonal polynomials, the algebra $\mathcal{D}(W)$, the Fourier algebras, the Darboux transformations, and some properties of the classical Hermite polynomials. In Section \ref{section 3}, we present the weight matrix $W$ together with the $W$-symmetric second-order differential operators in the algebra $\mathcal{D}(W)$. We also give an explicit expression of a sequence of orthogonal polynomials for $W$. In Section \ref{section 4}, we study the right Fourier algebra of the weight $W$ by relating it to the right Fourier algebra of the diagonal weight $\tilde{W} = \operatorname{diag}(e^{-x^{2}+2bx},e^{-x^{2}},e^{-x^{2}})$ of scalar Hermite weights. Thus, we obtain the general form of every differential operator in $\mathcal{F}_{R}(W)$. Finally, in Section \ref{section 5}, we determine the algebra $\mathcal{D}(W)$ as a $\mathbb{C}[D_{1}]$-module by using all results in the previous sections. This result allows us to prove that the algebra $\mathcal{D}(W)$ is not full, and consequently, we conclude that $W$ is not a Darboux transformation of any direct sum of classical scalar weights.

\section{Preliminaries} \label{section 2}
\subsection{Orthogonal polynomials and the algebra $\mathcal{D}(W)$}
We say that a matrix-valued smooth function $W:\mathbb{R} \to \operatorname{Mat}_{N}(\mathbb{C})$ is a weight matrix of size $N$ supported on a (possibly unbounded) interval $(x_{0},x_{1})$ if $W(x)$ is Hermitian positive definite almost everywhere on $(x_{0},x_{1})$, $W(x) = 0$ for all $x \notin (x_{0},x_{1})$, and with finite moments of all orders.

Given a weight matrix $W$ we consider the following Hermitian sesquilinear form in $\operatorname{Mat}_{N}(\mathbb{C}[x])$, the algebra of polynomials with coefficients in $\operatorname{Mat}_{N}(\mathbb{C})$, 
$$\langle P , Q \rangle = \langle P , Q \rangle_{W} = \int_{x_{0}}^{x_{1}}P(x)W(x)Q(x)^{\ast} \, dx, \text{ for all } P,Q \in \operatorname{Mat}_{N}(\mathbb{C}[x]).$$

With this sesquilinear form, one can construct sequences 
$\{Q_n\}_{n\in\mathbb{N}_0}$ of matrix-valued orthogonal polynomials for $W$. That is, each $Q_n$ is a polynomial of degree $n$ with a nonsingular leading coefficient, and $\langle Q_n, Q_m \rangle = 0$ for $n \neq m$.
It follows that two sequences of orthogonal polynomials $\{Q_{n}(x)\}_{n\in\mathbb{N}_{0}}$ and $\{R_{n}(x)\}_{n\in \mathbb{N}_{0}}$ for $W$ are related by $R_{n}(x) = M_{n}Q_{n}(x)$ for some sequence of nonsingular constant matrices $M_{n}$.
We observe that there is a unique sequence of monic orthogonal polynomials $\{P_n\}_{n\in\mathbb{N}_0}$ in $\operatorname{Mat}_N(\mathbb{C}[x])$.
By following a standard argument (see \cite{K49} or \cite{K71}) one shows that a sequence of orthogonal polynomials $\{Q_n\}_{n\in\mathbb{N}_0}$ satisfy a three-term recursion relation
\begin{equation}\label{ttrr}
    x Q_n(x)=A_{n}Q_{n+1}(x) + B_{n}Q_{n}(x)+ C_nP_{n-1}(x), \qquad n\in\mathbb{N}_0,
\end{equation}
where $Q_{-1}=0$ and $A_{n}, B_n, C_n$ are matrices depending on $n$ and not on $x$.

Throughout this paper, we consider differential operators of the form $D = \sum_{j=0}^{m} \partial^j F_{j}(x)$, where $\partial = \frac{d}{dx}$ and $F_{j}(x)$ is a matrix-valued function. These operators act {\em on the right-hand side} of a matrix-valued function as follows
$$P(x) \cdot D = \sum_{j=0}^{m} \partial^j(P)(x) F_{j}(x).$$

\noindent 
We consider the algebra of all differential operators with polynomial coefficients 
$$\operatorname{Mat}_{N}(\Omega[x])=\Big\{D = \sum_{j=0}^{m} \partial^{j}F_{j}(x) \, : F_{j} \in \operatorname{Mat}_{N}(\mathbb{C}[x]) \Big \}.$$
\noindent 
More generally, when necessary, we will also consider $\operatorname{Mat}_{N}(\Omega[[x]])$, the set of all differential operators with coefficients in $\mathbb{C}[[x]]$, the ring of power series with complex coefficients.

Given a weight matrix $W$ and  $\{Q_n\}_{n\in \mathbb{N}_0}$ a sequence of matrix-valued orthogonal polynomials with respect to $W$, we consider the algebra 
\begin{equation}\label{algDW}
  \mathcal D(W)=\{D\in \operatorname{Mat}_{N}(\Omega[x])\, : \, Q_n \cdot D=\Gamma_{n} Q_n, \, \text{ with } \Gamma_{n}\in \operatorname{Mat}_N(\mathbb{C}), \text{ for all }n\in\mathbb{N}_0\}.
\end{equation}
The definition of the algebra $\mathcal{D}(W)$ does not depend on the choice of the sequence $Q_{n}$. Since each other sequence is of the form $R_{n}(x) = M_{n}Q_{n}(x)$, we have that $Q_{n}(x)\cdot D = \Gamma_{n}Q_{n}(x)$ if and only if $R_{n}(x) \cdot D = M_{n}\Gamma_{n}M_{n}^{-1}R_{n}(x)$.

\

The following proposition states that the algebra $\mathcal{D}(W)$ is a subalgebra of  
$$\mathcal{D}_{N}(\Omega[x]) = \left \{ D = \sum_{j=0}^{m}\partial^{j}F_{j}(x) \in \operatorname{Mat}_{N}(\Omega[x]) \, : \, \deg(F_{j}) \leq j \right \},$$  
which consists of differential operators $D$ satisfying $\deg(P\cdot D) \leq \deg(P)$ for all polynomial $P \in \operatorname{Mat}_{N}(\mathbb{C}[x])$.  
Moreover, this proposition provides a formula for the eigenvalues of a differential operator for the sequence of monic orthogonal polynomials.  

\begin{prop}[\cite{GT07}, Propositions 2.6 and 2.7]\label{eigenvalue-prop}
  Let $W=W(x)$ be a weight matrix of size $N$ and let $\{P_n\}_{n\geq 0}$ be the sequence of monic orthogonal polynomials for $W$. If $D = \sum_{j=0}^{s}\partial^{i}F_{i}(x)$ is a differential operator of order $s$ such that
  $$P_n\cdot D=\Lambda_n(D) P_n, \qquad \text{for all } n\in\mathbb{N}_0,$$
  with $\Lambda_n(D)\in \operatorname{Mat}_N(\mathbb{C})$, then
 $F_i(x)=\sum_{j=0}^i x^j F_j^i$, $F_j^i \in \operatorname{Mat}_N(\mathbb{C})$, is a polynomial of $\deg(F_i)\leq i$. Moreover, $D$ is determined by the sequence $\{\Lambda_n(D)\}_{n\geq 0}$ and
 \begin{equation}\label{eigenvaluemonicos}
   \Lambda_n(D)=\sum_{i=0}^s [n]_i F_i^i, \qquad \text{for all } n\geq 0,
 \end{equation}
    where $[n]_i=n(n-1)\cdots (n-i+1)$, and  $[n]_0=1$.
\end{prop}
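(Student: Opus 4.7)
The identity $P_n\cdot D=\Lambda_n(D)\,P_n$ forces sharp degree restrictions on each coefficient $F_i(x)$. Since $P_n$ is monic of degree $n$, we have $\partial^{i}(P_n)=[n]_i\,x^{n-i}I+(\text{lower order})$, so the summand $\partial^{i}(P_n)\,F_i(x)$ in $P_n\cdot D$ has degree exactly $n-i+\deg F_i$, while the sum must have degree at most $n$. I would extract the polynomial bound from this observation by induction on $i$, and then read off the eigenvalue formula from a leading-coefficient comparison.

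\textbf{Step 1 (polynomial bound).} For $i=0$, applying $D$ to $P_0=I$ gives $F_0(x)=\Lambda_0(D)\,I$, a constant, since $\partial^{k}(I)=0$ for $k\geq 1$. Assuming inductively that $\deg F_j\leq j$ for all $j<i$ (with $i\leq s$), I would apply $D$ to $P_i$. Because $\partial^{j}(P_i)=0$ for $j>i$ and $\partial^{i}(P_i)=i!\,I$, the relation $P_i\cdot D=\Lambda_i(D)\,P_i$ rearranges to
\[
 i!\,F_i(x)=\Lambda_i(D)\,P_i(x)-\sum_{j=0}^{i-1}\partial^{j}(P_i)\,F_j(x).
\]
On the right-hand side, $\partial^{j}(P_i)$ has degree $i-j$ and, by the inductive hypothesis, $F_j$ has degree at most $j$, so each term has degree at most $i$. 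Hence $F_i$ is a polynomial of degree at most $i$.

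\textbf{Step 2 (eigenvalue formula and uniqueness).} Writing $F_i(x)=\sum_{j=0}^{i}x^{j}F_j^{i}$ with $F_j^i\in\operatorname{Mat}_{N}(\mathbb{C})$, the coefficient of $x^{n}$ in $\partial^{i}(P_n)\,F_i(x)$ is $[n]_i\,F_i^i$, because $x^{n-i}I$ commutes with every matrix and the top degrees combine as $x^{n-i}\cdot x^{i}=x^{n}$. Summing over $i$ and matching with the leading coefficient $\Lambda_n(D)$ of $\Lambda_n(D)\,P_n$ gives $\Lambda_n(D)=\sum_{i=0}^{s}[n]_i\,F_i^i$. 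For uniqueness, the identity from Step 1 recovers each $F_i$ successively from $\Lambda_0(D),\ldots,\Lambda_i(D)$ and from the monic polynomials $P_0,\ldots,P_i$ (which depend only on $W$), so the whole eigenvalue sequence determines every coefficient of $D$.

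\textbf{Main obstacle.} No single step is technically deep; the care lies in tracking degrees through the non-commutative matrix multiplications inside $\partial^{j}(P_i)\,F_j(x)$ and, crucially, in running the induction on $i$ (the order of the coefficient of $D$) rather than on $n$. Indexing by $i$ lets the equation $P_i\cdot D=\Lambda_i(D)P_i$ solve for $F_i$ using only the previously controlled $F_0,\ldots,F_{i-1}$; other choices of induction variable make the degree bookkeeping circular.
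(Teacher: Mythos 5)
Your proof is correct, and since the paper does not prove this proposition itself but cites \cite{GT07} (Propositions 2.6 and 2.7), there is nothing to diverge from: your induction on the order index $i$, solving $i!\,F_i(x)=\Lambda_i(D)P_i(x)-\sum_{j<i}\partial^{j}(P_i)F_j(x)$ to get the degree bounds, followed by leading-coefficient comparison for \eqref{eigenvaluemonicos} and the recursive recovery of the $F_i$ from the eigenvalue sequence, is essentially the standard argument given there.
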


\begin{prop} [\cite{GT07}, Proposition 2.8]\label{prop2.8-GT}
For each $n\in\mathbb{N}_0$, the mapping $D\mapsto \Lambda_n(D) $ is a representation of $\mathcal D(W)$ in $\operatorname{Mat}_N(\mathbb{C})$. 
Moreover, the sequence of representations $\{\Lambda_n\}_{n\in\mathbb{N}_0}$ separates the elements of $\mathcal D(W)$, i.e.
if $\Lambda_n(D_1)=\Lambda_n(D_2)$ for all $n\geq 0$ then $D_1=D_2$. 
\end{prop}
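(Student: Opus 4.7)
The plan is to prove the two claims separately, using the defining relation $P_n \cdot D = \Lambda_n(D) P_n$ and the degree structure on $\mathcal{D}_{N}(\Omega[x])$ described before Proposition \ref{eigenvalue-prop}.

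For the representation property, linearity of $D \mapsto \Lambda_n(D)$ is immediate. For multiplicativity, I would use associativity of the right action to write
\[ \Lambda_n(D_1 D_2)\, P_n = P_n \cdot (D_1 D_2) = (P_n \cdot D_1) \cdot D_2 = \bigl(\Lambda_n(D_1)\, P_n\bigr) \cdot D_2. \]
The key observation is that $\Lambda_n(D_1) \in \operatorname{Mat}_N(\mathbb{C})$ is constant in $x$, and since $\partial^j(M P) = M\, \partial^j(P)$ for any $M \in \operatorname{Mat}_N(\mathbb{C})$, the right-hand side simplifies to $\Lambda_n(D_1)(P_n \cdot D_2) = \Lambda_n(D_1)\Lambda_n(D_2)\, P_n$. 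Since $P_n$ is monic, comparing leading coefficients yields $\Lambda_n(D_1 D_2) = \Lambda_n(D_1)\Lambda_n(D_2)$, and $\Lambda_n(I) = I$ is trivial.

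For the separation property, by linearity it is enough to show that $\Lambda_n(D) = 0$ for every $n \geq 0$ forces $D = 0$. The hypothesis gives $P_n \cdot D = 0$ for all $n$. Since each $P_n$ is monic of degree $n$, every $Q \in \operatorname{Mat}_N(\mathbb{C}[x])$ admits a unique expansion $Q = \sum_{k=0}^{\deg Q} A_k P_k$ with $A_k \in \operatorname{Mat}_N(\mathbb{C})$ (peel off the leading term and induct on the degree), so $Q \cdot D = 0$ for every matrix polynomial $Q$. Writing $D = \sum_{j=0}^s \partial^j F_j(x)$ and applying $D$ to $x^k I$ for $k = 0, 1, \dots, s$ in turn, I get $(x^k I) \cdot D = k!\, F_k(x)$ modulo terms involving $F_0, \dots, F_{k-1}$, which vanish by the inductive hypothesis; hence $F_k \equiv 0$ for every $k$, and $D = 0$.

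The main technical point I anticipate is confirming cleanly the compatibility $(P \cdot D_1) \cdot D_2 = P \cdot (D_1 D_2)$ between the right action and operator composition in $\operatorname{Mat}_N(\Omega[x])$. This is a routine Leibniz-rule verification, but it is precisely what legitimizes pulling the constant matrix $\Lambda_n(D_1)$ past the derivatives appearing in $D_2$ during the multiplicativity calculation. Everything else reduces to monicity and degree bookkeeping, so no substantial obstacle is expected beyond this.
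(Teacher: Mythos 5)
The paper gives no proof of its own here (it simply cites \cite{GT07}), and your argument is correct and essentially the standard one behind that citation: multiplicativity comes from pulling the constant matrix $\Lambda_n(D_1)$ through the right action, whose compatibility $(P\cdot D_1)\cdot D_2=P\cdot(D_1D_2)$ is exactly the convention adopted for the product in $\operatorname{Mat}_N(\Omega[x])$, and separation comes from expanding arbitrary matrix polynomials in the monic $P_k$'s. The only remark worth making is that the separation claim already follows in one line from Proposition \ref{eigenvalue-prop} quoted in the paper ($D$ is determined by the sequence $\{\Lambda_n(D)\}$), so your induction evaluating $D$ on $x^kI$ is in effect a self-contained re-proof of that determination statement rather than a new ingredient.
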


The \textit{formal adjoint} on $\operatorname{Mat}_{N}(\Omega([[x]]))$, denoted by $\mbox{}^*$, is the unique involution that extends the Hermitian conjugation on $\operatorname{Mat}_{N}(\mathbb{C}[x])$ while mapping $\partial I$ to $-\partial I$.  

The \textit{formal $W$-adjoint} of a differential operator $D \in \operatorname{Mat}_{N}(\Omega[x])$ is given by  
\[
D^{\dagger} := W(x) D^{\ast} W(x)^{-1},
\]  
where $D^{\ast}$ represents the formal adjoint of $D$.  

An operator $D \in \operatorname{Mat}_{N}(\Omega[x])$ is said to be \textit{$W$-adjointable} if there exists another operator $\tilde{D} \in \operatorname{Mat}_{N}(\Omega[x])$ satisfying  
\[
\langle P \cdot D, Q \rangle = \langle P, Q \cdot \tilde{D} \rangle,
\]  
for all $P, Q \in \operatorname{Mat}_N(\mathbb{C}[x])$. In this case, we refer to $\tilde{D}$ as the $W$-adjoint of $D$.

\begin{prop} \label{adjunta D(W)} 
    If $D \in \mathcal{D}(W)$, then  $D$ is $W$-adjointable. Moreover, there exists a unique $\tilde D$ in $\mathcal D(W)$ such that 
  $$ \langle P D, Q\rangle  = \langle P, Q\tilde D\rangle \quad \text{ for all } P, Q \in \operatorname{Mat}_{N}(\mathbb{C})[x]. $$
\end{prop}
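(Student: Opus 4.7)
The plan is to construct $\tilde D$ as the formal $W$-adjoint $D^{\dagger} := WD^{*}W^{-1}$ and verify the required properties: the adjointness identity, polynomiality of its coefficients, and membership in $\mathcal D(W)$. Uniqueness comes first. If $\tilde D_{1},\tilde D_{2}\in\mathcal D(W)$ both satisfy the identity, then $E:=\tilde D_{1}-\tilde D_{2}$ satisfies $\langle P,Q\cdot E\rangle = 0$ for every $P,Q\in\operatorname{Mat}_{N}(\mathbb{C}[x])$. Setting $P=Q\cdot E$ yields $\int (Q\cdot E)\,W\,(Q\cdot E)^{*}\,dx=0$, and positive-definiteness of $W$ on its support forces $Q\cdot E\equiv 0$ for every polynomial $Q$; taking $Q=I$ gives $E=0$.

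For existence, I define $D^{\dagger}=WD^{*}W^{-1}$ as a differential operator with smooth, a priori non-polynomial, coefficients on the support of $W$. Integration by parts --- with boundary terms vanishing because the finite-moment hypothesis on $W$ makes polynomials paired with $W$ decay rapidly enough at the endpoints of its support --- yields
\[
\langle P\cdot D,Q\rangle=\langle P,Q\cdot D^{\dagger}\rangle\qquad\text{for all }P,Q\in\operatorname{Mat}_{N}(\mathbb{C}[x]).
\]
This is the desired adjointness identity, modulo showing that the candidate $D^{\dagger}$ actually has polynomial coefficients.

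It remains to show $D^{\dagger}$ lies in $\mathcal D(W)$. Combining the adjointness identity with the eigenequation $P_{n}\cdot D=\Lambda_{n}(D)P_{n}$ for the monic orthogonal polynomials gives
\[
\langle P_{n},P_{m}\cdot D^{\dagger}\rangle=\langle P_{n}\cdot D,P_{m}\rangle=\Lambda_{n}(D)\,\|P_{n}\|^{2}\,\delta_{nm}.
\]
Interpreting this as computing the $\{P_{k}\}$-Fourier coefficients of $P_{m}\cdot D^{\dagger}$ and invoking density of polynomials in $L^{2}(W)$ shows that $P_{m}\cdot D^{\dagger}=\|P_{m}\|^{2}\Lambda_{m}(D)^{*}\|P_{m}\|^{-2}\,P_{m}$, a polynomial of degree $m$. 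Since $D^{\dagger}$ has order at most $s=\operatorname{ord}(D)$ and sends polynomials to polynomials, applying it to the monomials $1,x,\ldots,x^{s}$ and solving inductively for its coefficients shows they are polynomials of the bounded degrees required by Proposition \ref{eigenvalue-prop}. Hence $D^{\dagger}\in\mathcal D(W)$ with $\Lambda_{n}(D^{\dagger})=\|P_{n}\|^{2}\Lambda_{n}(D)^{*}\|P_{n}\|^{-2}$. The main obstacle is the density step: it is what allows one to promote the vanishing of the Fourier coefficients of $P_{m}\cdot D^{\dagger}$ to the actual polynomial identity above, and it requires that $\{P_{k}\}$ span a dense subspace of $L^{2}(W)$, which holds thanks to the rapid decay of $W$ at the endpoints of its support.
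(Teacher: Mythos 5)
Your construction takes a genuinely different route from the paper, which proves this statement simply by citing Corollary 4.5 of \cite{GT07}; that argument is purely algebraic (it works with the sesquilinear form on matrix polynomials and the eigenvalue sequences $\Lambda_n$) and needs nothing beyond finite moments. Your analytic route --- set $\tilde D = D^{\dagger}=WD^{*}W^{-1}$, integrate by parts, then identify $P_m\cdot D^{\dagger}$ through its expansion in $L^{2}(W)$ --- has genuine gaps as written. First, the vanishing of the boundary terms is not a consequence of the finite-moment hypothesis: finite moments give no pointwise control of $W$ or of its derivatives at the endpoints of the support. What makes the boundary terms vanish is the regularity package the paper only introduces \emph{after} this proposition (every $W^{(n)}$ decays exponentially, with moment control), which is precisely the hypothesis underlying Proposition \ref{adjuntas}; the statement being proved here, in the generality of \cite{GT07}, does not assume it.

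Second, and more seriously, the step from $\langle P_n, P_m\cdot D^{\dagger}\rangle=\delta_{nm}\Lambda_n(D)\|P_n\|^{2}$ to the identity $P_m\cdot D^{\dagger}=\|P_m\|^{2}\Lambda_m(D)^{*}\|P_m\|^{-2}P_m$ needs two things: (i) density of matrix polynomials in $L^{2}(W)$, which you correctly flag but which is false for general weights with only finite moments (indeterminate moment problems), so it again requires hypotheses beyond the statement; and (ii) that the function $P_m\cdot D^{\dagger}$ lies in $L^{2}(W)$ at all. Point (ii) is never addressed and is not automatic: the coefficients of $D^{\dagger}=WD^{*}W^{-1}$ involve $W^{-1}$ and derivatives of $W$, and nothing you assume controls integrals of the type $\int p\,W^{(i)}W^{-1}W^{(j)}q\,dx$; without (ii), orthogonality to every $P_n$ yields nothing. (A minor further point: in the uniqueness argument, taking $Q=I$ only kills the order-zero coefficient of $E$; you need all monomials $x^{k}I$, or the separation property of Proposition \ref{prop2.8-GT}, to conclude $E=0$.) Under the paper's standing regularity assumptions, and after verifying (ii), your argument does go through and even identifies $\tilde D=D^{\dagger}$ with $\Lambda_n(\tilde D)=\|P_n\|^{2}\Lambda_n(D)^{*}\|P_n\|^{-2}$; but as a proof of the proposition in the generality in which it is stated and cited, the analytic route does not suffice, which is why the algebraic argument of \cite{GT07} is the one the paper relies on.
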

\begin{proof}
    See Corollary 4.5 in \cite{GT07}.
\end{proof}

\

In this paper, we consider weight matrices $W(x)$ that satisfy certain regularity conditions. More precisely, for every integer $n \geq 0$, the $n$-th derivative $W^{(n)}(x)$ decays exponentially at infinity, and there exists a scalar polynomial $p_n(x)$ such that the product $W^{(n)}(x)p_n(x)$ has finite moments. See \cite{CY18}*{Section 2.2}. Under these assumptions, the following proposition holds.

\begin{prop}[\cite{CY18}, Prop. 2.23] \label{adjuntas}  
If $D \in \operatorname{Mat}_{N}(\Omega[x])$ is $W$-adjointable and the formal $W$-adjoint $D^{\dagger}$ belongs to $\operatorname{Mat}_{N}(\Omega[x])$, then $D^{\dagger}$ is the $W$-adjoint of $D$, i.e.,
 $$\langle \, P\cdot D,Q\, \rangle=\langle \, P,\, Q\cdot {D}^\dagger \rangle,$$ for all $P,Q\in \operatorname{Mat}_N(\mathbb{C}[x])$.
 \end{prop}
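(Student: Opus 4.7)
The plan is to reduce the claim to a direct integration-by-parts computation. Write $D = \sum_{j=0}^{m} \partial^{j} F_{j}$ with $F_{j} \in \operatorname{Mat}_{N}(\mathbb{C}[x])$. From the defining properties of the formal-adjoint involution on $\operatorname{Mat}_{N}(\Omega[[x]])$ — it reverses products, restricts to Hermitian conjugation on matrix polynomials, and sends $\partial I \mapsto -\partial I$ — one obtains $D^{*} = \sum_{j=0}^{m} (-1)^{j} F_{j}^{*} \partial^{j}$, and hence
\[
D^{\dagger} = W D^{*} W^{-1} = \sum_{j=0}^{m} (-1)^{j} W F_{j}^{*} \partial^{j} W^{-1}.
\]
The whole proposition then amounts to verifying that this specific expression satisfies $\langle P \cdot D, Q \rangle = \langle P, Q \cdot D^{\dagger} \rangle$ for every $P, Q \in \operatorname{Mat}_{N}(\mathbb{C}[x])$.

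First I would unwind the right-hand side. Expanding the right action term by term yields $Q \cdot D^{\dagger} = \sum_{j} (-1)^{j} (Q W F_{j}^{*})^{(j)} W^{-1}$. Taking the pointwise matrix Hermitian conjugate, using $W(x)^{*} = W(x)$, and then left-multiplying by $W(x)$ gives the clean simplification
\[
W (Q \cdot D^{\dagger})^{*} = \sum_{j=0}^{m} (-1)^{j} (F_{j} W Q^{*})^{(j)}.
\]
Substituting into $\langle P, Q \cdot D^{\dagger} \rangle = \int_{x_{0}}^{x_{1}} P(x) W(x) (Q \cdot D^{\dagger})^{*}(x)\, dx$ and integrating by parts $j$ times in the $j$-th summand transfers every derivative from $F_{j} W Q^{*}$ onto $P$; the accumulated sign $(-1)^{j}$ cancels the prefactor and leaves $\sum_{j} \int P^{(j)} F_{j} W Q^{*}\, dx = \int (P \cdot D)\, W Q^{*}\, dx = \langle P \cdot D, Q \rangle$, which is the identity we want.

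The main obstacle is justifying that every boundary term produced by the successive integrations by parts actually vanishes. These terms have the shape $\bigl[\, P^{(i)} (F_{j} W Q^{*})^{(j-1-i)} \,\bigr]_{x_{0}}^{x_{1}}$ for $0 \leq i \leq j-1$, and once the derivative is expanded by the Leibniz rule each becomes a matrix polynomial in $x$ multiplied by some $W^{(k)}(x)$. This is exactly where the regularity hypotheses recalled just before the statement enter: the exponential decay of every $W^{(k)}$ at the (possibly infinite) endpoints $x_{0}, x_{1}$, together with the existence of a scalar polynomial $p_{k}$ such that $W^{(k)} p_{k}$ has finite moments, force all such boundary contributions to vanish and also guarantee absolute convergence of every intermediate integral. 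With these estimates in place, the computation sketched above is fully rigorous and the required identity follows; uniqueness of the $W$-adjoint within $\operatorname{Mat}_{N}(\Omega[x])$ is automatic, since any two candidates $\tilde{D}_{1}, \tilde{D}_{2}$ would satisfy $\langle P, Q\cdot(\tilde{D}_{1} - \tilde{D}_{2}) \rangle = 0$ for all $P, Q$ polynomial, forcing $\tilde{D}_{1} = \tilde{D}_{2}$.
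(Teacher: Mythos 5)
The paper itself gives no proof of this proposition; it is imported verbatim from \cite{CY18}*{Prop.~2.23}, so your argument can only be checked against that setting. Your computation is the right one and is carried out correctly: from the involution property, $D^{*}=\sum_{j}(-1)^{j}F_{j}^{*}\partial^{j}$, hence $W\,(Q\cdot D^{\dagger})^{*}=\sum_{j}(-1)^{j}(F_{j}WQ^{*})^{(j)}$, and $j$-fold integration by parts in each summand returns $\langle P\cdot D,Q\rangle$ up to boundary contributions; absolute convergence of all the integrals is indeed guaranteed by the moment and decay hypotheses, since $Q\cdot D^{\dagger}$ is a matrix polynomial.

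The gap is in how you dispose of the boundary terms, and it is signalled by the fact that you never use the hypothesis that $D$ is $W$-adjointable: if your argument were complete as stated, it would prove the stronger claim that $D^{\dagger}\in\operatorname{Mat}_{N}(\Omega[x])$ by itself forces $D^{\dagger}$ to be the adjoint, and that stronger claim fails for weights on a finite interval whose derivatives do not vanish at the endpoints (take $w\equiv 1$ on $(-1,1)$ and $D=\partial$; then $D^{\dagger}=-\partial$ has polynomial coefficients, yet $\int_{-1}^{1}P'Q^{*}\,dx\neq-\int_{-1}^{1}P(Q')^{*}\,dx$ already for $P=x$, $Q=1$, the discrepancy being exactly the surviving boundary term). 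The regularity recalled before the statement gives exponential decay of $W^{(k)}$ only \emph{at infinity}, not ``at the (possibly infinite) endpoints'' as you assert; at finite endpoints you must instead invoke the paper's convention that $W$ is smooth on all of $\mathbb{R}$ and vanishes identically outside $(x_{0},x_{1})$ (which forces $W^{(k)}(x_{0})=W^{(k)}(x_{1})=0$), or else genuinely use the adjointability hypothesis as in \cite{CY18}. For the weight actually studied in this paper (support $\mathbb{R}$, all derivatives of $W$ decaying exponentially) your boundary terms do vanish and the computation goes through, but as written the vanishing is misattributed and the unused hypothesis marks the case your proof does not cover.
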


\ 

A differential operator $D \in \operatorname{Mat}_{N}(\Omega[x])$ is called $W$-symmetric if $\langle P\cdot D, Q \rangle = \langle P , Q\cdot D \rangle$ for all polynomials $P,Q \in \operatorname{Mat}_{N}(\Omega[x])$. In particular, the algebra $\mathcal{D}(W)$ is determined by the $W$-symmetric operators. We have that the set $\mathcal{S}(W)$, of all $W$-symmetric differential operators in $\mathcal{D}(W)$, satisfies that 
$$\mathcal{D}(W) = \mathcal{S}(W) \oplus i \mathcal{S}(W)$$
as real vector spaces.

\

The following proposition states an important result about the $W$-symmetric differential operators in $\mathcal{D}_{N}(\Omega[x])$. 

\begin{prop}[\cite{GT07}, Prop. 2.10] \label{grados}
    Let $D = \sum_{j=0}^{m}\partial^{j}F_{j} \in \operatorname{Mat}_{N}(\Omega[x])$ be a $W$-symmetric differential operator such that $F_{j}$ is of degree less than or equal to $j$. Then $D$ belongs to the algebra $\mathcal{D}(W)$.
\end{prop}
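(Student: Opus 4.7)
The plan is to test $D$ directly on the sequence $\{P_n\}_{n\geq 0}$ of monic orthogonal polynomials for $W$. The hypothesis $\deg(F_j)\leq j$ is precisely the statement that $D\in \mathcal{D}_N(\Omega[x])$: for any polynomial $P$ of degree $n$, each summand $\partial^j(P)F_j$ has degree at most $(n-j)+j = n$, so $\deg(P\cdot D)\leq \deg(P)$. In particular $P_n\cdot D$ is a polynomial of degree at most $n$, and since $\{P_0,\ldots,P_n\}$ spans the space of such polynomials as a left $\operatorname{Mat}_N(\mathbb{C})$-module (each $P_k$ is monic), there exist matrices $C_{n,0},\ldots,C_{n,n}\in \operatorname{Mat}_N(\mathbb{C})$ with
\[ P_n\cdot D \;=\; \sum_{k=0}^{n} C_{n,k}\, P_k. \]

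Next I would extract each coefficient $C_{n,k}$ by pairing the above identity with $P_k$. A direct calculation using $\langle M P_\ell, P_k\rangle = \int M P_\ell W P_k^{\ast}\,dx = M\,\langle P_\ell, P_k\rangle$ combined with the orthogonality of the $P_k$ gives
\[ C_{n,k} \;=\; \langle P_n\cdot D,\, P_k\rangle\, \langle P_k, P_k\rangle^{-1}, \qquad 0\leq k\leq n, \]
where $\langle P_k, P_k\rangle$ is invertible because it is Hermitian positive-definite (inherited from the positive-definiteness of $W$).

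The $W$-symmetry hypothesis then closes the argument. For $k<n$,
\[ \langle P_n\cdot D,\, P_k\rangle \;=\; \langle P_n,\, P_k\cdot D\rangle, \]
and by the degree-preservation property established in the first paragraph, $P_k\cdot D$ is a polynomial of degree at most $k<n$. Writing it as a left $\operatorname{Mat}_N(\mathbb{C})$-combination of $P_0,\ldots,P_{n-1}$ and once more invoking orthogonality, we conclude that the right-hand side vanishes. Hence $C_{n,k}=0$ for every $k<n$, leaving $P_n\cdot D = \Gamma_n P_n$ with $\Gamma_n := C_{n,n}$, i.e.\ exactly the defining condition $D\in\mathcal{D}(W)$ from \eqref{algDW}.

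I do not expect a substantive obstacle here: the entire argument is a clean reduction to the orthogonality of the monic polynomials against polynomials of strictly smaller degree. The only point requiring a small amount of care is the bookkeeping of matrix coefficients on the left of the $P_k$ — specifically, that orthogonality against $P_k$ implies orthogonality against every constant-matrix multiple $M P_k$ — but the linearity of the sesquilinear form in its first argument handles this transparently.
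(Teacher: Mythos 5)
Your argument is correct and is essentially the standard proof of this fact (the one given in the cited reference \cite{GT07}, Prop.\ 2.10, since the paper itself only cites it): expand $P_n\cdot D$ in the left $\operatorname{Mat}_N(\mathbb{C})$-span of $P_0,\ldots,P_n$ using the degree bound, then use $W$-symmetry and orthogonality to annihilate the coefficients $C_{n,k}$ with $k<n$. The only cosmetic remark is that the final step uses the behaviour of the form in its \emph{second} argument, $\langle P_n, MP_k\rangle=\langle P_n,P_k\rangle M^{\ast}$, rather than linearity in the first, but this changes nothing.
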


In particular, to ensure that a second-order differential operator belongs to $\mathcal{S}(W)$, we have the following proposition, 

\begin{prop}\label{symmeq2}
    Let $W$ be a weight matrix supported on $(x_{0},x_{1})$. Let $D = \partial^{2}F_{2}(x) + \partial F_{1}(x) + F_{0} \in \mathcal{D}_{N}(\Omega[x])$ be a second-order differential operator such that
    \begin{equation}
\begin{split}
  F_2 W &=WF_2^*,  \\
    2(F_2W)'-F_1W &=WF_1^*,\\
    (F_2W)''-(F_1W)'+F_0W & =WF_0^*,   
\end{split}
\end{equation}
and 
$$\lim_{x\to x_{0},x_{1}}F_{2}(x)W(x)=0, \quad \lim_{x \to x_{0},x_{1}} (F_{1}(x)W(x)-W(x)F_{1}(x)^{\ast}) = 0.$$
Then $D$ is a $W$-symmetric differential operator in $\mathcal{D}(W)$.
\end{prop}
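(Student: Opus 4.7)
The plan is to establish the $W$-symmetry of $D$ by direct integration by parts, using the three algebraic identities to convert one side of the inner product into the other, and the boundary hypotheses to annihilate the surface terms. Once $W$-symmetry is in hand, Proposition~\ref{grados} immediately gives $D\in\mathcal{D}(W)$, since the hypothesis $D\in\mathcal{D}_N(\Omega[x])$ already guarantees $\deg F_j\le j$.

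First I would write $\langle P\cdot D,Q\rangle=\int_{x_0}^{x_1}(P''F_2+P'F_1+PF_0)\,W\,Q^{\ast}\,dx$ and integrate by parts (twice on the $P''$ term and once on the $P'$ term) to transfer all derivatives from $P$ onto the product $W Q^{\ast}$. After collecting terms, the interior integrand takes the form
\[
P\bigl[(F_2W)''-(F_1W)'+F_0W\bigr]Q^{\ast}+P\bigl[2(F_2W)'-F_1W\bigr](Q^{\ast})'+P\,[F_2W]\,(Q^{\ast})''.
\]
Substituting the three algebraic identities $F_2W=WF_2^{\ast}$, $2(F_2W)'-F_1W=WF_1^{\ast}$, and $(F_2W)''-(F_1W)'+F_0W=WF_0^{\ast}$ turns this into $P\,W\bigl[F_0^{\ast}Q^{\ast}+F_1^{\ast}(Q^{\ast})'+F_2^{\ast}(Q^{\ast})''\bigr]=P\,W\,(Q\cdot D)^{\ast}$, which is precisely the integrand of $\langle P,Q\cdot D\rangle$.

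It remains to kill the surface terms $\bigl[P'F_2WQ^{\ast}-P F_2 W(Q^{\ast})'+P\bigl(F_1W-(F_2W)'\bigr)Q^{\ast}\bigr]_{x_0}^{x_1}$. The first two pieces vanish by the hypothesis $\lim F_2W=0$. For the third, the decisive observation is that the second algebraic identity can be rewritten as $(F_2W)'=\tfrac{1}{2}(F_1W+WF_1^{\ast})$, whence $F_1W-(F_2W)'=\tfrac{1}{2}(F_1W-WF_1^{\ast})$, and this vanishes at the endpoints by the second boundary hypothesis. Putting everything together yields $\langle P\cdot D,Q\rangle=\langle P,Q\cdot D\rangle$ for all $P,Q\in\operatorname{Mat}_N(\mathbb{C}[x])$, so $D$ is $W$-symmetric and Proposition~\ref{grados} applies.

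The main obstacle is the boundary bookkeeping: after using $F_2W\to 0$ one is left with a stray $(F_2W)'$ factor that is \emph{not} directly controlled by either hypothesis. The key trick is to realize that the combination $F_1W-(F_2W)'$ that actually appears is exactly $\tfrac{1}{2}(F_1W-WF_1^{\ast})$ by virtue of the second algebraic identity, which is precisely what the second limit condition is designed to annihilate. The rest is a routine but careful integration-by-parts computation.
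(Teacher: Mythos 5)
Your argument is correct and is exactly the standard symmetry-equations computation that the paper relies on without writing out (it goes back to Gr\"unbaum--Tirao/Dur\'an--Gr\"unbaum): two integrations by parts, the three identities to turn the interior integrand into $PW(Q\cdot D)^{\ast}$, the rewriting $F_1W-(F_2W)'=\tfrac12(F_1W-WF_1^{\ast})$ to dispose of the stray surface term, and then Proposition~\ref{grados} together with $\deg F_j\le j$ to conclude $D\in\mathcal{D}(W)$. The only point you should make explicit is that at an infinite endpoint the surface terms carry the polynomial factors $P$, $P'$, $Q^{\ast}$, $(Q^{\ast})'$, so the bare limits $F_2W\to 0$ and $F_1W-WF_1^{\ast}\to 0$ must be strengthened to decay against polynomials; this is automatic here because the paper's standing regularity assumptions give exponential decay of $W$ (and its derivatives) while the $F_j$ are polynomial, so it is a clarification rather than a gap.
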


\subsection{Right Fourier algebra and Darboux transformation}

We recall the concept of the right Fourier algebra associated with a weight matrix $W(x)$, following \cite{CY18}. Consider the space of all semi-infinite sequences of matrix-valued rational functions, defined as  
$$ \mathcal{P} =\{ P:\mathbb{C}\times \mathbb{N}_0 \longrightarrow M_N(\mathbb{C}) \, : \, P(x,n)  \text{ is a rational function of } x \text{ for each fixed } n \}. $$  
We denote by $\operatorname{Mat}_{N}(\mathcal{S})$ the algebra consisting of operators of the form  
\[
\mathscr{M}= \sum_{j=-\ell}^{k} A_{j}(n)\, \delta^j, \quad A_{j}(n)\in\operatorname{Mat}_{N}(\mathbb{C}),
\]  
where the left action on $\mathcal{P}$ is given by  
\begin{equation}\label{discreteop}
(\mathscr{M}\cdot P)(x,n)= \sum_{j=-\ell}^{k} A_{j}(n)(\delta^j \cdot P)(x,n) = \sum_{j=-\ell}^k A_j(n) P(x,n+j).
\end{equation}

Let $P_n(x) = P(x,n)$ be the sequence of monic orthogonal polynomials for the weight matrix $W$. The right Fourier algebra associated with $W$ is defined as  
\begin{equation} \label{Fourier def}
\begin{split}
    \mathcal F_R(W)= \mathcal F_R(P) &=\{ \mathfrak D\in \operatorname{Mat}_{N}(\Omega[x])\, : \, \exists \, \mathscr{M} \in \operatorname{Mat}_{N}(\mathcal{S}) \text{ such that } P(x,n)\cdot \mathfrak D=\mathscr{M}\cdot P(x,n) \}.
\end{split}
\end{equation}
In particular, the algebra $\mathcal{D}(W)$ is a subalgebra of $\mathcal{F}_{R}(W)$.
\smallskip

From \cite{CY18}, Theorem 3.7, we recall an explicit characterization of the right Fourier algebra associated with a weight matrix $W$:  
\begin{equation} \label{fourier algebra}
\mathcal{F}_{R}(W) = \{ \mathfrak{D}\in \operatorname{Mat}_{N}(\Omega[x]) \,:\, \mathfrak{D} \text{ is } W\text{-adjointable and } \mathfrak{D}^{\dagger}\in \operatorname{Mat}_{N}(\Omega[x]) \}.
\end{equation}

\begin{definition}\label{Darbou-transf-def}
Let $W(x)$ and $\widetilde{W}(x)$ be weight matrices, and let $P_n(x)$ and $\widetilde{P}_n(x)$ denote their respective sequences of monic orthogonal polynomials. We say that $\widetilde{P}_n(x)$ is a bispectral Darboux transformation of $P_n(x)$ if there exist differential operators $\mathfrak{D},\widetilde{\mathfrak{D}} \in \mathcal{F}_{R}(W)$, polynomials $F(x),\widetilde{F}(x)$, and sequences of nonsingular matrices $C_n$ and $\widetilde{C}_n$ for almost every $n$, satisfying  
$$C_n\widetilde{P}_n(x) = P_n(x) \cdot \mathfrak{D}F(x)^{-1}, \quad  \widetilde{C}_n P_n(x) = \widetilde{P}_n(x) \cdot \widetilde{F}(x)^{-1}\widetilde{\mathfrak{D}}.$$
We then say that $\widetilde{W}(x)$ is a {\em Darboux transformation} of $W(x)$ whenever $\widetilde{P}_n(x)$ is a bispectral Darboux transformation of $P_n(x)$.  
\end{definition}  

\subsection{The classical Hermite polynomials}
The monic Hermite polynomials $\{h_{n}(x)\}_{n\in\mathbb{N}_{0}}$ are orthogonal with respect to the Hermite weight $w(x) = e^{-x^{2}}$ supported on $\mathbb{R}$. They can be expressed using the Rodrigues formula  
$$h_{n}(x) = \frac{(-1)^{n}}{2^{n}}e^{x^{2}}\frac{d^{n}}{dx^{n}}e^{-x^{2}}.$$  
The shifted monic Hermite polynomials $h_{n}(x-b)$, with $b \in \mathbb{R}$, are obtained from an affine transformation of the argument and remain orthogonal with respect to the shifted Hermite weight $w_{b}(x) = e^{-x^{2}+2bx}$. They satisfy the three-term recurrence relation  
$$h_{n}(x-b)x = h_{n+1}(x-b) + bh_{n}(x-b) + \frac{n}{2}h_{n-1}(x-b),$$  
and are eigenfunctions of the second-order differential operator $\delta_{b} = \partial^{2} + (-2x+2b)\partial$, satisfying  
$$h_{n}(x-b)\cdot \delta_{b} = -2n h_{n}(x-b).$$  
Moreover, \cite{M05} established that the algebra $\mathcal{D}(w_{b})$ is a polynomial algebra over $\delta_{b}$, i.e., $\mathcal{D}(w_{b}) = \mathbb{C}[\delta_{b}]$. Furthermore, the set of the $w_{b}$-symmetric operators in $\mathcal{D}(w_{b})$ is $\mathcal{S}(w_{b}) = \mathbb{R}[\delta_{b}]$.

\smallskip

\section{The $3\times 3$ Hermite weight} \label{section 3}
In this section, we introduce the $3\times 3$ Hermite-type weight, along with the second-order differential operators in $\mathcal{D}(W)$. We also give an expression for a sequence of orthogonal polynomials for $W$.

Let $a,b,c \in \mathbb{R}-\{0\}$. We define the $3\times 3$ Hermite-type weight matrix $W$ as  
\begin{equation}\label{weight}
W(x) = e^{-x^{2}}\begin{pmatrix} e^{2bx} + a^{2}x^{2} && ax && acx^{2} \\ ax && 1 && cx \\ acx^{2} && cx && c^{2}x^{2} + 1  \end{pmatrix}.
\end{equation}
 By Proposition \ref{symmeq2}, we have that the algebra $\mathcal{D}(W)$ contains the following $W$-symmetric second-order differential operators 
\begin{equation}\label{D1D2}
    \begin{split}
        D_{1} & = \partial^{2}I + \partial \begin{pmatrix}  2b-2x && -2bax+2a &&  0 \\ 0 && -2x && 0 \\ 0 && 2c && -2x \end{pmatrix} + \begin{pmatrix} 0 && 0 && 0 \\ 0 && 2 && 0 \\ 0 && 0 && 0 \end{pmatrix}, \text{ and } \\
        D_{2} & = \partial^{2} \begin{pmatrix} 0 && ax && 0 \\ 0 && 1 && 0 \\ 0 && cx && 0 \end{pmatrix} + \partial \begin{pmatrix}  0 && 2a &&  -\frac{2ax}{c} \\ 0 && 0 &&  -\frac{2}{c} \\ 0 &&  2c+\frac{2}{c} && -2x \end{pmatrix} + \begin{pmatrix} 2+\frac{4}{c^{2}} &&  0 && -\frac{2a}{c} \\  0 &&  2+\frac{4}{c^{2}} &&   0 \\  0 && 0 && 0 \end{pmatrix}.
    \end{split}
\end{equation}
Thus, $W$ is a solution to the Matrix Bochner Problem. The eigenvalues of $D_{1}$ and $D_{2}$ are given by

$$\Lambda_{n}(D_{1}) = \begin{pmatrix} -2n && -2ban && 0 \\ 0 && -2n + 2 && 0 \\ 0 && 0 && -2n\end{pmatrix}, \quad \Lambda_{n}(D_{2}) = \begin{pmatrix}2 + \frac{4}{c^{2}} && 0 && -\frac{2a}{c}(n+1) \\ 0 && 2+ \frac{4}{c^{2}} && 0 \\ 0 && 0 && -2n \end{pmatrix}.$$
A result that follows directly from the expression of the eigenvalues of $D_{1}$ and $D_{2}$ is the following proposition.
\begin{prop}\label{relations}  
    The operators $D_{1}$ and $D_{2}$ of the algebra $\mathcal{D}(W)$ satisfy the following relations:  
    \begin{equation*}  
        D_{1}D_{2} = D_{2}D_{1}, \quad D_{2}D_{1} = D_{2}^{2} + \frac{2(c^{2}+2)}{c^{2}}(D_{1}-D_{2}).  
    \end{equation*}  
\end{prop}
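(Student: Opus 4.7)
The plan is to prove both identities at the level of eigenvalue matrices and then invoke Proposition \ref{prop2.8-GT}. Since $D \mapsto \Lambda_n(D)$ is an algebra representation and the family $\{\Lambda_n\}_{n\in\mathbb{N}_0}$ separates elements of $\mathcal{D}(W)$, it is enough to verify, for every $n\geq 0$, the two matrix identities
\[
\Lambda_n(D_1)\Lambda_n(D_2) = \Lambda_n(D_2)\Lambda_n(D_1), \qquad \Lambda_n(D_2)\Lambda_n(D_1) = \Lambda_n(D_2)^2 + \tfrac{2(c^2+2)}{c^2}\bigl(\Lambda_n(D_1) - \Lambda_n(D_2)\bigr).
\]

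For convenience I would introduce the abbreviation $\alpha := 2 + \tfrac{4}{c^2} = \tfrac{2(c^2+2)}{c^2}$, so that the $(1,1)$ and $(2,2)$ entries of $\Lambda_n(D_2)$ are $\alpha$. Both $\Lambda_n(D_1)$ and $\Lambda_n(D_2)$ are upper triangular with a single off-diagonal entry each, which makes the multiplications essentially trivial: only one product in each matrix entry is nonzero. First I would compute $\Lambda_n(D_1)\Lambda_n(D_2)$ and $\Lambda_n(D_2)\Lambda_n(D_1)$ directly and observe that the off-diagonal $(1,2)$-entries both equal $-2ban\alpha$, and the $(1,3)$-entries both equal $\tfrac{4an(n+1)}{c}$, while the remaining entries agree trivially. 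This gives the commutation $D_1 D_2 = D_2 D_1$.

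For the quadratic relation, I would expand the right-hand side entrywise. The diagonal is immediate since $\Lambda_n(D_2)^2$ and $\Lambda_n(D_1)$, $\Lambda_n(D_2)$ are all diagonal there; the only real check is in the $(1,3)$-entry, where one computes
\[
\bigl(\Lambda_n(D_2)^2\bigr)_{1,3} = -\tfrac{2a\alpha(n+1)}{c} + \tfrac{4an(n+1)}{c},
\]
and adding $\alpha\bigl(\Lambda_n(D_1)-\Lambda_n(D_2)\bigr)_{1,3} = \alpha\cdot\tfrac{2a(n+1)}{c}$ exactly cancels the first term and reproduces $\tfrac{4an(n+1)}{c}$, which is the $(1,3)$-entry of $\Lambda_n(D_2)\Lambda_n(D_1)$. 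The $(1,2)$-entry matches in a similar one-line check.

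There is no real obstacle here: the proof is a finite linear-algebra verification, and the only thing to keep in mind is the identification $\alpha = \tfrac{2(c^2+2)}{c^2}$, which is precisely the diagonal eigenvalue of $D_2$ that makes the quadratic relation close. Once both identities are verified for every $n$, Proposition \ref{prop2.8-GT} lifts them from the $\Lambda_n$-level to identities in $\mathcal{D}(W)$, completing the proof.
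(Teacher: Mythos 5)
Your proposal is correct and follows the same route as the paper: verify the two identities for the eigenvalue matrices $\Lambda_n(D_1)$, $\Lambda_n(D_2)$ for every $n$, then use the fact that $D\mapsto\Lambda_n(D)$ is a representation whose family separates elements of $\mathcal{D}(W)$ (Proposition \ref{prop2.8-GT}) to lift them to operator identities. Your entrywise computations (in particular the $(1,3)$-entry cancellation involving $\alpha=\tfrac{2(c^2+2)}{c^2}$) check out, and you actually spell out the verification that the paper leaves implicit.
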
  

\begin{proof}
    The eigenvalues of the operators satisfy
    \begin{equation*}
        \Lambda_{n}(D_{1})\Lambda_{n}(D_{2}) = \Lambda_{n}(D_{2})\Lambda_{n}(D_{1}), \quad \Lambda_{n}(D_{1})\Lambda_{n}(D_{2}) = \Lambda_{n}(D_{2})^{2}+\frac{2(c^{2}+2)}{c^{2}}(\Lambda_{n}(D_{1})-\Lambda_{n}(D_{2})).
    \end{equation*}
    Thus, the statement holds.
\end{proof}

In order to study the weight matrix $W$, we make the following observation. 
The weight matrix $W$ defined in \eqref{weight} can be factored as
\begin{equation}\label{TwT}
    W(x) = T(x)\tilde{W}(x)T(x)^{\ast},
\end{equation}
where $\tilde{W}(x)$ is the direct sum of scalar Hermite weights $\tilde{W}(x) = \begin{pmatrix} e^{-x^{2}+2bx} && 0 && 0 \\ 0 && e^{-x^{2}} && 0 \\ 0 && 0 && e^{-x^{2}} \end{pmatrix}$, and $T(x) = e^{Ax} = \begin{pmatrix}1 & ax & 0 \\ 0 & 1 & 0 \\ 0 & cx & 1 \end{pmatrix}$, with the nilpotent matrix $A = \begin{pmatrix} 0 & a & 0 \\ 0 & 0 & 0 \\0 & c & 0 \end{pmatrix}$.

With this factorization, from \cite{BP25}, we have an explicit expression for a sequence of orthogonal polynomials for $W$. 
\begin{prop}\label{Qn}
A sequence of orthogonal polynomials for $W$ is given by 
$$Q_{n}(x) = \begin{pmatrix} h_{n}(x-b) && a(h_{n+1}(x)-h_{n}(x-b)x) && 0 \\ -\frac{ae^{-b^{2}}n}{2}h_{n-1}(x-b) && \frac{a^{2}e^{-b^{2}}n}{2}h_{n-1}(x-b)x + h_{n}(x) + \frac{c^{2}n}{2}h_{n-1}(x)x && - \frac{cn}{2}h_{n-1}(x)\\ 0 && c(h_{n+1}(x) - h_{n}(x)x) && h_{n}(x)\end{pmatrix},$$
where $h_{n}(x)$ is the sequence of monic orthogonal polynomials for the Hermite weight $e^{-x^{2}}$.
\end{prop}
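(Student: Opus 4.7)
The plan is to exploit the factorization $W(x) = T(x)\tilde W(x) T(x)^{\ast}$ from \eqref{TwT}. The key observation is that $A^{2} = 0$, so $T(x) = I + Ax$ is a polynomial matrix with polynomial inverse $T(x)^{-1} = I - Ax$. This reduces the orthogonality of $Q_{n}$ with respect to $W$ to ordinary orthogonality relations among Hermite polynomials against the diagonal weight $\tilde W$.

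First I would verify that $Q_{n}$ is a matrix polynomial of degree $n$ with nonsingular leading coefficient. The only potentially problematic entries are $(1,2)$ and $(3,2)$, which contain the combinations $h_{n+1}(x) - h_{n}(x-b)x$ and $h_{n+1}(x) - h_{n}(x)x$; the three-term recurrence $h_{n}(x)\,x = h_{n+1}(x) + \tfrac{n}{2}h_{n-1}(x)$ (and its shifted analogue) shows that the $x^{n+1}$ terms cancel, so both entries have degree $\leq n$. A short computation then identifies the coefficient of $x^{n}$ in $Q_{n}$ as an upper-triangular matrix with diagonal $\bigl(1,\; 1 + \tfrac{n}{2}(a^{2} e^{-b^{2}} + c^{2}),\; 1\bigr)$, which is invertible for every $n \geq 0$.

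Next I would compute the auxiliary matrix $\tilde Q_{n}(x) := Q_{n}(x)T(x)$ by direct multiplication. Using the same recurrence, one checks that $\tilde Q_{n}$ enjoys the crucial structural property that its first column involves only the shifted Hermite polynomials $h_{k}(x-b)$, while its second and third columns involve only the unshifted $h_{k}(x)$, matching precisely the diagonal entries of $\tilde W$. Writing $\langle Q_{n}, Q_{m}\rangle_{W} = \int \tilde Q_{n}(x)\,\tilde W(x)\,\tilde Q_{m}(x)^{\ast}\,dx$ and using the diagonality of $\tilde W$, this integral decomposes as $\sum_{j=1}^{3}\int w_{jj}(x)\,c_{j}^{(n)}(x)\,(c_{j}^{(m)}(x))^{\ast}\,dx$, where $c_{j}^{(n)}$ denotes the $j$-th column of $\tilde Q_{n}$ and $w_{jj}$ the $j$-th diagonal entry of $\tilde W$. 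Each of the nine matrix entries then becomes a short linear combination of scalar Hermite inner products.

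For $n \neq m$, the diagonal entries $(1,1)$, $(2,2)$, $(3,3)$ vanish trivially because every summand carries a factor $\delta_{n,m}$. The off-diagonal entries can be nonzero only when $|m - n| = 1$, and in that case cancellation follows immediately from the norm identity $\|h_{n+1}\|^{2} = \tfrac{n+1}{2}\|h_{n}\|^{2}$; for instance, the $(1,2)$ entry with $m = n+1$ collapses to $-\tfrac{am}{2}\|h_{n}\|^{2} + a\|h_{n+1}\|^{2} = 0$, and the other off-diagonal cases cancel by identical calculations. I expect the main obstacle to be purely bookkeeping: the structural observation that each column of $\tilde Q_{n}$ couples with a single scalar weight already reduces the problem to Hermite orthogonality, so the remaining work is simply to track the nine entry-wise cancellations carefully.
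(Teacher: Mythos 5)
Your argument is correct, and it is a genuinely different route from the paper's: the paper offers no proof at all for this proposition, simply invoking the factorization $W=T\tilde W T^{\ast}$ of \eqref{TwT} and citing the general construction of \cite{BP25}, whereas you verify the specific formula directly. Your verification is sound: since $A^{2}=0$, $T^{-1}(x)=I-Ax$, and a short multiplication gives
\[
Q_{n}(x)T(x)=\begin{pmatrix} h_{n}(x-b) & a\,h_{n+1}(x) & 0 \\ -\tfrac{ae^{-b^{2}}n}{2}h_{n-1}(x-b) & h_{n}(x) & -\tfrac{cn}{2}h_{n-1}(x) \\ 0 & c\,h_{n+1}(x) & h_{n}(x)\end{pmatrix},
\]
whose first column carries only shifted Hermite polynomials and whose last two columns carry only unshifted ones, exactly as you claim; the diagonality of $\tilde W$ then reduces $\langle Q_{n},Q_{m}\rangle_{W}$ to scalar Hermite inner products, and the off-diagonal cancellations follow from $\|h_{n+1}\|^{2}=\tfrac{n+1}{2}\|h_{n}\|^{2}$. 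One bookkeeping point you pass over silently: in the entries coupling the first column with the weight $w_{b}=e^{-x^{2}+2bx}$, the factor $e^{-b^{2}}$ built into $Q_{n}$ cancels against the $e^{b^{2}}$ coming from $\int h_{k}(x-b)h_{l}(x-b)w_{b}(x)\,dx=e^{b^{2}}\delta_{kl}\|h_{k}\|^{2}$, which is what makes your $(1,2)$ computation $-\tfrac{am}{2}\|h_{n}\|^{2}+a\|h_{n+1}\|^{2}=0$ literally correct. Your leading-coefficient check also matches the paper's \eqref{An} (upper triangular, $(1,2)$ entry $abn$, diagonal $(1,\,1+\tfrac n2(a^{2}e^{-b^{2}}+c^{2}),\,1)$), so nonsingularity holds for all $n$. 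What your approach buys is a self-contained, elementary proof that does not depend on the external machinery of \cite{BP25}; what the paper's citation buys is generality, since the result there covers the whole family of weights of the form $e^{Ax}\tilde W(x)e^{A^{\ast}x}$ rather than this single example.
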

The leading coefficient of $Q_{n}(x)$ is 
\begin{equation}\label{An}
    A_{n} = \begin{pmatrix}1 && abn && 0 \\ 0 && \frac{a^{2}e^{-b^{2}}n}{2} + 1 + \frac{c^{2}n}{2} && 0 \\ 0 && 0 && 1\end{pmatrix}.
\end{equation}
By direct calculations, the sequence $Q_{n}$ satisfies the following three-term recurrence relation
\begin{equation*}
    \begin{split}
        Q_{n}(x)x & = \begin{psmallmatrix}1 & - \frac{2ab}{g_{n+1}} & 0 \\ 0 & \frac{g_{n}}{g_{n+1}} & 0 \\ 0 & 0 & 1 \end{psmallmatrix}Q_{n+1}(x) + \begin{psmallmatrix}\frac{b(c^{2}(n+1)+2)}{g_{n+1}} & \frac{a}{g_{n}} & - \frac{ac(n+1)b}{g_{n+1}} \\ \frac{e^{-b^{2}}a}{g_{n+1}} & 0 & \frac{c}{g_{n+1}} \\ 0 & \frac{c}{g_{n}} & 0 \end{psmallmatrix} Q_{n}(x) \\
        & \quad + \begin{psmallmatrix}\frac{n(g_{n}+a^{2}e^{-b^{2}}}{2g_{n}} & 0 & \frac{acn}{2g_{n}} \\ 0 & \frac{n}{2} & 0 \\ \frac{ace^{-b^{2}}n}{2g_{n}} & 0 & \frac{n(g_{n}+c^{2})}{g_{n}} \end{psmallmatrix}Q_{n-1}(x),
    \end{split}
\end{equation*}
where $g_{n} = a^{2}e^{-b^{2}}n+c^{2}n+2$.

\section{The right Fourier algebra for $W$} \label{section 4}
In this section, we study the right Fourier algebra for the weight matrix $W$. By the factorization of $W$ given in \eqref{TwT} we relate the right Fourier algebra of $W$ with the right Fourier algebra of $\tilde{W}$ as we state in the following proposition. 

\begin{prop}\label{tfrt}
    Let $T$ be a matrix-valued polynomial such that $T^{-1}$ is also a matrix-valued polynomial. Let $\tilde{W}$ and $W$ be weight matrices such that $W(x) = T(x)\tilde{W}(x)T(x)^{\ast}$. Then, the right Fourier algebras satisfy 
    $$\mathcal{F}_{R}(W) = T\mathcal{F}_{R}(\tilde{W})T^{-1}.$$
    Moreover, we have that $\mathcal{A}$ is $W$-symmetric if and only if $T^{-1}\mathcal{A}T$ is $\tilde{W}$-symmetric.
\end{prop}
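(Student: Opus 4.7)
The plan is to reduce both assertions to a single algebraic identity: if $D=T\mathfrak{D}T^{-1}$, then $D^{\dagger_W}=T\mathfrak{D}^{\dagger_{\tilde{W}}}T^{-1}$. Once this identity is established, the statement follows by combining it with the characterization \eqref{fourier algebra} of $\mathcal{F}_R$ as the set of polynomial-coefficient operators whose formal adjoint is itself polynomial.

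Since $T,T^{-1}\in\operatorname{Mat}_{N}(\mathbb{C}[x])\subset\operatorname{Mat}_{N}(\Omega[x])$ (viewed as zero-order operators), conjugation $\mathfrak{D}\mapsto T\mathfrak{D}T^{-1}$ is a bijection of $\operatorname{Mat}_{N}(\Omega[x])$, with inverse $D\mapsto T^{-1}DT$. For the formal adjoint computation, I would use the antihomomorphism property of $*$ to obtain $D^{*}=(T^{-1})^{*}\mathfrak{D}^{*}T^{*}=(T^{*})^{-1}\mathfrak{D}^{*}T^{*}$, and then substitute $W=T\tilde{W}T^{*}$ (so $W^{-1}=(T^{*})^{-1}\tilde{W}^{-1}T^{-1}$) to get
\[
D^{\dagger_W}=WD^{*}W^{-1}=T\tilde{W}T^{*}(T^{*})^{-1}\mathfrak{D}^{*}T^{*}(T^{*})^{-1}\tilde{W}^{-1}T^{-1}=T\tilde{W}\mathfrak{D}^{*}\tilde{W}^{-1}T^{-1}=T\mathfrak{D}^{\dagger_{\tilde{W}}}T^{-1},
\]
where the two canceled products $T^{*}(T^{*})^{-1}=I$ are zero-order (multiplication) operators, so associativity of operator composition makes the cancellation legitimate.

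With this identity, the equivalence between the conditions ``$D\in\operatorname{Mat}_{N}(\Omega[x])$ with $D^{\dagger_W}\in\operatorname{Mat}_{N}(\Omega[x])$'' and ``$\mathfrak{D}\in\operatorname{Mat}_{N}(\Omega[x])$ with $\mathfrak{D}^{\dagger_{\tilde{W}}}\in\operatorname{Mat}_{N}(\Omega[x])$'' is immediate, since conjugation by the polynomial matrix $T$ (with polynomial inverse) preserves $\operatorname{Mat}_{N}(\Omega[x])$. Applying \eqref{fourier algebra} to both $W$ and $\tilde{W}$ then yields $\mathcal{F}_{R}(W)=T\mathcal{F}_{R}(\tilde{W})T^{-1}$. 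For the $W$-symmetric claim, note that by Proposition \ref{adjuntas} an operator $\mathcal{A}\in\operatorname{Mat}_{N}(\Omega[x])$ in the Fourier algebra is $W$-symmetric iff $\mathcal{A}^{\dagger_W}=\mathcal{A}$; writing $\mathcal{A}=T\mathfrak{A}T^{-1}$ and invoking the boxed identity transforms this condition into $\mathfrak{A}^{\dagger_{\tilde{W}}}=\mathfrak{A}$, i.e., $\mathfrak{A}=T^{-1}\mathcal{A}T$ is $\tilde{W}$-symmetric.

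The only real obstacle is the careful bookkeeping in the formal adjoint calculation, specifically ensuring that the antihomomorphism identity $(AB)^{*}=B^{*}A^{*}$, the equality $(T^{-1})^{*}=(T^{*})^{-1}$, and the associativity needed to cancel the $T^{*}(T^{*})^{-1}$ factors inside the operator product are all applied correctly. Everything else is routine manipulation, given that $T$ and $T^{-1}$ both lie in $\operatorname{Mat}_{N}(\mathbb{C}[x])$.
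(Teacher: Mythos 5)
Your key identity $(T\mathfrak{D}T^{-1})^{\dagger_{W}}=T\,\mathfrak{D}^{\dagger_{\tilde W}}\,T^{-1}$ is correct, and the formal computation establishing it (antihomomorphism property of $*$, $(T^{-1})^{*}=(T^{*})^{-1}$, and $W^{-1}=(T^{*})^{-1}\tilde W^{-1}T^{-1}$) is sound. This is a genuinely different route from the paper: the paper works directly with the sesquilinear forms, using the change of variables $\langle PT^{-1},QT^{-1}\rangle_{W}=\langle P,Q\rangle_{\tilde W}$ to show that $W$-adjointability of $\mathcal{A}$ transfers to $\tilde W$-adjointability of $T^{-1}\mathcal{A}T$ with adjoint $T^{-1}\mathcal{A}^{\dagger}T$, whereas you push everything through the formal $\dagger$ and then invoke the characterization \eqref{fourier algebra}. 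Your version has the advantage of isolating a clean algebraic conjugation identity that also gives the symmetry statement at once; the paper's version has the advantage of verifying the analytic clause of \eqref{fourier algebra} explicitly.

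That analytic clause is the one place where your argument is incomplete as written: \eqref{fourier algebra} requires \emph{both} that $\mathfrak{D}^{\dagger}$ have polynomial coefficients \emph{and} that $\mathfrak{D}$ be $W$-adjointable in the inner-product sense, and your ``immediate'' equivalence only transfers the first condition. Proposition \ref{adjuntas} does not supply adjointability (it assumes it), so you still need one line showing that if $\mathcal{A}$ is $W$-adjointable then $T^{-1}\mathcal{A}T$ is $\tilde W$-adjointable, and conversely; this follows exactly from the identity $\langle PT^{-1},QT^{-1}\rangle_{W}=\langle P,Q\rangle_{\tilde W}$ (the map $P\mapsto PT^{-1}$ being a bijection of $\operatorname{Mat}_{N}(\mathbb{C}[x])$ because $T^{-1}$ is polynomial), which is precisely the computation in the paper's proof. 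The same remark applies to your symmetry argument: to characterize $\tilde W$-symmetry of $\mathfrak{A}=T^{-1}\mathcal{A}T$ by $\mathfrak{A}^{\dagger_{\tilde W}}=\mathfrak{A}$ via Proposition \ref{adjuntas}, you need $\mathfrak{A}$ to be $\tilde W$-adjointable, which again rests on this transfer (or can be obtained directly from the displayed change-of-variables identity without passing through $\dagger$ at all). With that one line added, your proof is complete.
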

\begin{proof}
    Let $\mathcal{A} \in \mathcal{F}_{R}(W)$, therefore $\mathcal{A}$ is $W$-adjointable, and $\mathcal{A}$ and $\mathcal{A}^{\dagger}$ belong to $\operatorname{Mat}_{N}(\Omega[x])$. Since $T$ and $T^{-1}$ are polynomials, we have that the operator $T^{-1}\mathcal{A}T$ belongs to $\operatorname{Mat}_{N}(\Omega[x])$. Let $P,Q \in \operatorname{Mat}_{N}(\mathbb{C}[x])$, then we have 
    \begin{equation*}
        \begin{split}
            \langle P \cdot T^{-1}\mathcal{A}T, Q \rangle_{\tilde{W}} & = \int_{x_{0}}^{x_{1}}P(x)T^{-1}(x)\cdot \mathcal{A}W(x)(Q(x)T^{-1}(x))^{\ast}dx = \langle PT^{-1}\cdot \mathcal{A},QT^{-1}\rangle_{W}  \\
            & \quad = \langle PT^{-1},QT^{-1}\cdot \mathcal{A}^{\dagger} \rangle_{W} = \langle P , Q \cdot T^{-1} \mathcal{A}^{\dagger}T\rangle_{\tilde{W}}.
        \end{split}
    \end{equation*}
    Thus, $T^{-1}\mathcal{A}T$ is $\tilde{W}$-adjointable, and then it belongs to $\mathcal{F}_{R}(\tilde{W})$. The other inclusion follows analogously. The final part of the statement follows by the fact that the $\tilde{W}$-adjoint of $T^{-1}\mathcal{A}T$ is $T^{-1}\mathcal{A}^{\dagger}T$, where $\mathcal{A}^{\dagger}$ is the $W$-adjoint of $\mathcal{A}$.
\end{proof}

In the next proposition, we study the shape of the operators in $\mathcal{F}_{R}(\tilde{W})$ so we can know the shape of the operators in $\mathcal{F}_{R}(W)$.

\begin{prop}\label{rfw}
    The right Fourier algebra $\mathcal{F}_{R}(\tilde{W})$ satisfies
    $$\mathcal{F}_{R}(\tilde{W}) \subseteq \left \{ \sum_{j=0}^{m}\partial^{j}F_{j} \, \biggm| \, F_{j}(x) = \begin{pmatrix} p_{j}(x) && 0 && 0 \\ 0 && q_{j}(x) && r_{j}(x) \\ 0 && s_{j}(x) && t_{j}(x) \end{pmatrix}, \text{ with } p_{j},q_{j},r_{j},s_{j},t_{j} \in \mathbb{C}[x] \right \}.$$
\end{prop}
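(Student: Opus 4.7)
The plan is to exploit the characterization \eqref{fourier algebra}: $\mathfrak{D}\in\mathcal{F}_{R}(\tilde{W})$ holds iff the formal $\tilde{W}$-adjoint $\mathfrak{D}^{\dagger}=\tilde{W}\mathfrak{D}^{\ast}\tilde{W}^{-1}$ again has polynomial coefficients. The crucial feature of $\tilde{W}$ is that it is diagonal, and the ratios of its diagonal entries are
\[
\tilde{W}_{11}\tilde{W}_{22}^{-1}=\tilde{W}_{11}\tilde{W}_{33}^{-1}=e^{2bx},\qquad \tilde{W}_{22}\tilde{W}_{11}^{-1}=\tilde{W}_{33}\tilde{W}_{11}^{-1}=e^{-2bx},
\]
while every remaining ratio $\tilde{W}_{ii}\tilde{W}_{\ell\ell}^{-1}$ equals $1$. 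Because $b\neq 0$, the transcendental factors $e^{\pm 2bx}$ single out exactly the four off-diagonal positions $(1,2),(1,3),(2,1),(3,1)$ as the ones where polynomiality can fail.

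Given $\mathfrak{D}=\sum_{j=0}^{M}\partial^{j}F_{j}\in\mathcal{F}_{R}(\tilde{W})$, I would first expand
\[
\mathfrak{D}^{\dagger}=\sum_{j=0}^{M}(-1)^{j}\tilde{W}F_{j}^{\ast}\partial^{j}\tilde{W}^{-1}=\sum_{j,k}(-1)^{j}\binom{j}{k}\tilde{W}F_{j}^{\ast}(\tilde{W}^{-1})^{(k)}\partial^{j-k}
\]
using $\partial^{j}\tilde{W}^{-1}=\sum_{k=0}^{j}\binom{j}{k}(\tilde{W}^{-1})^{(k)}\partial^{j-k}$. Writing $((\tilde{W}^{-1})_{\ell\ell})^{(k)}=p_{k,\ell}(x)(\tilde{W}^{-1})_{\ell\ell}$ for suitable polynomials $p_{k,\ell}$ (with $p_{0,\ell}=1$), and using that $\tilde{W}$ and all $(\tilde{W}^{-1})^{(k)}$ are diagonal, the $(i,\ell)$-entry of the coefficient of $\partial^{m}$ in $\mathfrak{D}^{\dagger}$ takes the form
\[
\tilde{W}_{ii}\tilde{W}_{\ell\ell}^{-1}\cdot \sum_{j\geq m}(-1)^{j}\binom{j}{j-m}\overline{(F_{j})_{\ell i}(x)}\,p_{j-m,\ell}(x).
\]

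For each bad pair $(i,\ell)\in\{(1,2),(1,3),(2,1),(3,1)\}$, the prefactor $\tilde{W}_{ii}\tilde{W}_{\ell\ell}^{-1}=e^{\pm 2bx}$ is not polynomial, so the polynomial sum on the right must vanish for every $m$. Evaluating this vanishing condition at $m=M$ gives $(F_{M})_{\ell i}=0$ (since $p_{0,\ell}=1$); descending inductively in $m$ and clearing the already-known zero terms then yields $(F_{j})_{\ell i}\equiv 0$ for every $j$ at each of the four bad positions $(\ell,i)\in\{(2,1),(3,1),(1,2),(1,3)\}$. This is precisely the claimed shape.

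The only real work is the Leibniz bookkeeping in the expansion of $\mathfrak{D}^{\dagger}$, and no genuine difficulty arises: the transcendence of $e^{\pm 2bx}$ for $b\neq 0$ forces the polynomial part to vanish entry-by-entry, and the resulting linear system in the unknowns $\{(F_{j})_{\ell i}\}_{j}$ is triangular in $j$, so the inductive cascade is automatic.
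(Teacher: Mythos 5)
Your proof is correct and follows essentially the same route as the paper: both exploit the characterization \eqref{fourier algebra} and the fact that conjugating by the diagonal weight $\tilde{W}$ introduces factors $e^{\pm 2bx}$ (with $b\neq 0$) precisely in the entries $(1,2),(1,3),(2,1),(3,1)$, forcing those polynomial entries of every $F_j$ to vanish; the paper simply examines $(\mathcal{A}^{\dagger})^{\ast}=\tilde{W}^{-1}\mathcal{A}\tilde{W}$ instead of $\tilde{W}\mathfrak{D}^{\ast}\tilde{W}^{-1}$, while you make the descending induction on the order explicit. (Your Leibniz identity should carry signs $(-1)^k$ under the paper's right-action convention, but this does not affect the argument.)
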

\begin{proof}
    Let $\mathcal{A} \in \mathcal{F}_{R}(\tilde{W})$. Then, it follows that $\mathcal{A}$ is $\tilde{W}$-adjointable, it belongs to $\operatorname{Mat}_{N}(\Omega)[x]$, and its adjoint $\mathcal{A}^{\dagger}$ also belongs to $\operatorname{Mat}_{N}(\Omega)[x]$.

    We write $\mathcal{A} = \sum_{j=0}^{m} \partial^{j} F_{j}$, with $F_{j}(x) = \begin{pmatrix} p_{1}^{j}(x) && p_{2}^{j}(x) && p_{3}^{j}(x) \\ p_{4}^{j}(x) && p_{5}^{j}(x) && p_{6}^{j}(x) \\ p_{7}^{j}(x) && p_{8}^{j}(x) && p_{9}^{j}(x) \end{pmatrix} \in \operatorname{Mat}_{N}(\mathbb{C})[x]$. Since $\operatorname{Mat}_{N}(\Omega[x])$ is closed under the formal adjoint $\ast$, it follows that $(\mathcal{A}^{\dagger})^{\ast} = \tilde{W}^{-1}\mathcal{A}\tilde{W}$ belongs to $\operatorname{Mat}_{N}(\Omega[x])$. We have 
    \begin{equation*}
            (\mathcal{A}^{\dagger})^{\ast} =\tilde{W}(x)^{-1}\mathcal{A}\tilde{W}(x) = \sum_{j=0}^{m}\left ( \partial I + \begin{pmatrix} 2(x-b) & 0 & 0 \\ 0 & -2x & 0 \\ 0 & 0 & -2x \end{pmatrix} \right ) ^{j}\tilde{W}(x)^{-1}F_{j}(x)\tilde{W}(x).
    \end{equation*}
    From this, we see that $\tilde{W}(x)^{-1}F_{j}(x)\tilde{W}(x) = \begin{pmatrix}p_{1}^{j}(x) & p_{2}^{j}(x)e^{-2bx} & p_{3}^{j}(x)e^{-2bx} \\ p_{4}^{j}(x)e^{2bx} & p_{5}^{j}(x) & p_{6}^{j}(x) \\ p_{7}^{j}(x)e^{2bx} & p_{8}^{j}(x) & p_{9}^{j}(x) \end{pmatrix}$ must have rational function entries for each $j$. Consequently, $p_{2}^{j} = p_{3}^{j} = p_{4}^{j} = p_{7}^{j} = 0$ for all $j$, and the statement follows.
\end{proof}

Now, we can describe the shape of the differential operators in the right Fourier algebra of $W$. 

\begin{prop}\label{5.3.}
    Let $\mathcal{A}$ be a differential operator in $\mathcal{F}_{R}(W)$, then $\mathcal{A}$ satisfies
    \begin{equation}\label{A}
        \mathcal{A} = \begin{pmatrix} \mathfrak{d}_{1} && -a\mathfrak{d}_{1}x + ax\mathfrak{d}_{2} - acx\mathfrak{d}_{3}x && ax\mathfrak{d}_{3} \\ 0 && \mathfrak{d}_{2} - c\mathfrak{d}_{3}x && \mathfrak{d}_{3} \\ 0 && cx\mathfrak{d}_{2} + \mathfrak{d}_{4} - c^{2} x\mathfrak{d}_{3}x - c\mathfrak{d}_{5}x && cx\mathfrak{d}_{3} + \mathfrak{d}_{5} \end{pmatrix},
    \end{equation}
    for some $\mathfrak{d}_{1}$, $\mathfrak{d}_{2}$, $\mathfrak{d}_{3}$, $\mathfrak{d}_{4}$, and $\mathfrak{d}_{5}\in \Omega[x]$. Moreover, if $\mathcal{A}$ is $W$-symmetric, then $\mathfrak{d}_{1}$ is $w_{b}$-symmetric, and $\mathfrak{d}_{2},\mathfrak{d}_{5}$ are $w$-symmetric, where $w_{b}(x) = e^{-x^{2}+2bx}$ and $w(x) = e^{-x^{2}}$.
\end{prop}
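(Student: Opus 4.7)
The plan is to reduce everything to the block-diagonal structure available in $\mathcal{F}_{R}(\tilde{W})$, using the conjugation principle of Proposition \ref{tfrt}.

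By Proposition \ref{tfrt} applied to the factorization $W(x) = T(x)\tilde{W}(x)T(x)^{\ast}$, we have $\mathcal{A} = T\mathcal{B}T^{-1}$ for some $\mathcal{B} \in \mathcal{F}_{R}(\tilde{W})$. Since $T = e^{Ax}$ with $A^{2}=0$, its inverse is simply $T^{-1} = I - Ax = \begin{psmallmatrix} 1 & -ax & 0 \\ 0 & 1 & 0 \\ 0 & -cx & 1 \end{psmallmatrix}$, a polynomial matrix. Proposition \ref{rfw} forces $\mathcal{B}$ to be block-diagonal with a scalar entry $\mathfrak{d}_{1}$ in position $(1,1)$ and a $2\times 2$ block $\begin{psmallmatrix}\mathfrak{d}_{2} & \mathfrak{d}_{3} \\ \mathfrak{d}_{4} & \mathfrak{d}_{5}\end{psmallmatrix}$ in the remaining entries. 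I would then expand the matrix product $T\mathcal{B}T^{-1}$ entry by entry, treating $x$ and each $\mathfrak{d}_{i}$ as non-commuting operators in $\Omega[x]$; a direct computation produces exactly the expression displayed in \eqref{A} and establishes the first claim.

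For the symmetry statement, assume $\mathcal{A}$ is $W$-symmetric. By the last assertion of Proposition \ref{tfrt}, $\mathcal{B}$ is $\tilde{W}$-symmetric. Since $\tilde{W} = \operatorname{diag}(w_{b},w,w)$ and $\mathcal{B}$ respects the same block decomposition, I would test the identity $\langle P\cdot\mathcal{B},Q\rangle_{\tilde{W}} = \langle P,Q\cdot\mathcal{B}\rangle_{\tilde{W}}$ on elementary matrix polynomials $P = pE_{ij}$ and $Q = qE_{kl}$. For $(j,l)=(1,1)$ this reduces to the scalar identity $\langle p\cdot\mathfrak{d}_{1},q\rangle_{w_{b}} = \langle p,q\cdot\mathfrak{d}_{1}\rangle_{w_{b}}$, giving the $w_{b}$-symmetry of $\mathfrak{d}_{1}$; for $(j,l)=(2,2)$ and $(j,l)=(3,3)$ it reduces to the analogous identity with respect to $w$, yielding the $w$-symmetry of $\mathfrak{d}_{2}$ and $\mathfrak{d}_{5}$.

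The only delicate point I foresee is the bookkeeping in the product $T\mathcal{B}T^{-1}$: since $\partial$ and $x$ do not commute, expressions such as $\mathfrak{d}_{i}x$ and $x\mathfrak{d}_{i}$ must be kept as formal operator compositions rather than collapsed as function products, so that the correspondence $\alpha = \mathfrak{d}_{2}$, $\beta = \mathfrak{d}_{3}$, $\gamma = \mathfrak{d}_{4}$, $\delta = \mathfrak{d}_{5}$ reads off correctly from the expanded matrix. Once this is handled, no deeper conceptual difficulty arises.
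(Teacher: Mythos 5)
Your proposal is correct and follows essentially the same route as the paper: conjugate by $T$ via Proposition \ref{tfrt}, invoke Proposition \ref{rfw} for the block structure of $\mathcal{B}$, expand $T\mathcal{B}T^{-1}$ to get \eqref{A}, and transfer $W$-symmetry of $\mathcal{A}$ to $\tilde{W}$-symmetry of $\mathcal{B}$. The only (harmless) difference is in the last step: the paper reads the scalar symmetries off the formal identity $\mathcal{B}=\tilde{W}\mathcal{B}^{\ast}\tilde{W}^{-1}$, whereas you test the sesquilinear identity on elementary matrix polynomials, which yields the same conclusion.
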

\begin{proof}
    By Proposition \ref{tfrt}, we have that $\mathcal{A} = T\mathcal{B}T^{-1}$ for some differential operator $\mathcal{B} \in \mathcal{F}_{R}(\tilde{W})$. Now, by Proposition \ref{rfw}, we have that 
    $$\mathcal{B} = \begin{pmatrix}\mathfrak{d}_{1} && 0 && 0 \\ 0 && \mathfrak{d}_{2} && \mathfrak{d}_{3} \\ 0 && \mathfrak{d}_{4} && \mathfrak{d}_{5} \end{pmatrix}$$
    for some $\mathfrak{d}_{1}$, $\mathfrak{d}_{2}$, $\mathfrak{d}_{3}$, $\mathfrak{d}_{4}$, and $\mathfrak{d}_{5} \in \Omega[x]$. We also have by Proposition \ref{tfrt} that if $\mathcal{A}$ is $W$-symmetric, then $\mathcal{B}$ is $\tilde{W}$-symmetric. Thus, we have $\mathcal{B} = \tilde{W}\mathcal{B}^{\ast}\tilde{W}^{-1}$, which implies that $\mathfrak{d}_{1} = w_{b}\mathfrak{d}_{1}^{\ast}w_{b}^{-1}$, $\mathfrak{d}_{2} = w\mathfrak{d}_{2}^{\ast}w^{-1}$, and $\mathfrak{d}_{5} = w\mathfrak{d}_{5}^{\ast}w^{-1}$. Finally, by computing $T\mathcal{B}T^{-1}$, the statement holds.
\end{proof}

\section{The algebra $\mathcal{D}(W)$} \label{section 5}
This section aims to prove that the algebra $\mathcal{D}(W)$ is generated by the $W$-symmetric second-order differential operators $D_{1}$ and $D_{2}$ defined in \eqref{D1D2}. First of all, we give the explicit factorization of $D_{1}$ and $D_{2}$ as $T\mathcal{B}T^{-1}$ for a $\tilde{W}$-symmetric operator $\mathcal{B}$,
\begin{equation}\label{fact fact}
    \begin{split}
        D_{1} & = T(x) \begin{pmatrix} \partial^{2} + \partial (-2x+2b) & 0 & 0 \\ 0 & \partial^{2} + \partial (-2x) + 2 & 0 \\ 0 & 0 & \partial^{2} + \partial (-2x) \end{pmatrix} T(x)^{-1}, \\
        D_{2} & = T(x) \begin{pmatrix} \frac{2(c^{2}+2)}{c^{2}} & 0 & 0 \\ 0 & \partial^{2} + \partial (-2x) +\frac{2(c^{2}+2)}{c^{2}} & -\partial \frac{2}{c} \\0 & \frac{2}{c} (\partial -2x) & 2\end{pmatrix}T(x)^{-1}.
    \end{split}
\end{equation}

\begin{prop}\label{5.1.}
    Let $D$ be a $W$-symmetric differential operator in $\mathcal{D}(W)$. Then, $D$ has the form
    \begin{equation}\label{AD}
        D = \begin{pmatrix} \mathfrak{d}_{1} & -a\mathfrak{d}_{1}x + ax\mathfrak{d}_{2} - acx\mathfrak{d}_{3}x & ax\mathfrak{d}_{3} \\ 0 & \mathfrak{d}_{2} - c \mathfrak{d}_{3}x & \mathfrak{d}_{3} \\ 0 & cx\mathfrak{d}_{2} + \mathfrak{d}_{4} - c^{2}x\mathfrak{d}_{3}x - c \mathfrak{d}_{5}x & cx\mathfrak{d}_{3} + \mathfrak{d}_{5} \end{pmatrix},
    \end{equation}
    for some $\mathfrak{d}_{1} \in \mathcal{S}(e^{-x^{2}+2bx})$, $\mathfrak{d}_{2}, \mathfrak{d}_{5} \in \mathcal{S}(e^{-x^{2}})$, and $\mathfrak{d}_{3}, \mathfrak{d}_{4} \in \Omega[x]$. Moreover, the eigenvalues of $D$ are given by  
    \[
    \Lambda_{n}(D) = \begin{pmatrix} \gamma_{1}(n) & \gamma_{2}(n) & \gamma_{3}(n) \\ 0 & \gamma_{4}(n) & 0 \\ 0 & \gamma_{5}(n) & \gamma_{6}(n) \end{pmatrix},
    \]
    for some polynomials $\gamma_{j}(n)$, $j = 1,\ldots,6$.
\end{prop}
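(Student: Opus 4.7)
The plan is to combine the structural result Proposition \ref{5.3.} for $\mathcal{F}_{R}(W)$ with the degree bound from Proposition \ref{eigenvalue-prop} and then track, entry by entry, how the constraints on $F_{j}$ translate into constraints on the scalar components $\mathfrak{d}_{i}$. Since $\mathcal{D}(W) \subseteq \mathcal{F}_{R}(W)$, Proposition \ref{5.3.} immediately gives that $D$ has the form \eqref{A} for some $\mathfrak{d}_{1},\dots,\mathfrak{d}_{5} \in \Omega[x]$, with $\mathfrak{d}_{1}$ already $w_{b}$-symmetric and $\mathfrak{d}_{2}, \mathfrak{d}_{5}$ already $w$-symmetric. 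The two tasks left are to upgrade these symmetry statements to actual membership in the respective $\mathcal{S}$-sets, and to derive the claimed block shape of $\Lambda_{n}(D)$.

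For the membership statements, I would write $D=\sum_{j} \partial^{j} F_{j}$; by Proposition \ref{eigenvalue-prop}, each $F_{j}$ is a matrix polynomial of degree $\leq j$. Expanding $\mathfrak{d}_{1}=\sum \partial^{j} p_{j}$, $\mathfrak{d}_{2}=\sum \partial^{j} q_{j}$, $\mathfrak{d}_{3}=\sum \partial^{j} r_{j}$, and $\mathfrak{d}_{5}=\sum \partial^{j} s_{j}$, one reads off the entries of \eqref{A}: the $(1,1)$ position gives $\deg p_{j} \leq j$; the $(1,3)$ position $ax\mathfrak{d}_{3}$ has $\partial^{j}$-coefficient $ax r_{j}$, which forces the strict bound $\deg r_{j} \leq j-1$; the $(2,2)$ position $\mathfrak{d}_{2} - c\mathfrak{d}_{3} x$ has $\partial^{j}$-coefficient $q_{j}-cx r_{j}$, and since $\deg(cx r_{j}) \leq j$ by the previous step, one concludes $\deg q_{j} \leq j$; the $(3,3)$ position $cx\mathfrak{d}_{3} + \mathfrak{d}_{5}$ with coefficient $cx r_{j}+s_{j}$ yields $\deg s_{j} \leq j$. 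Then each of $\mathfrak{d}_{1}, \mathfrak{d}_{2}, \mathfrak{d}_{5}$ is a scalar operator satisfying the hypotheses of the scalar case of Proposition \ref{grados}, hence it lies in $\mathcal{D}(w_{b})$ or $\mathcal{D}(w)$ as appropriate, and therefore in the required $\mathcal{S}$-set.

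For the shape of $\Lambda_{n}(D)$, I would apply the formula $\Lambda_{n}(D)=\sum_{i} [n]_{i} F_{i}^{i}$ from Proposition \ref{eigenvalue-prop}, where $F_{i}^{i}$ denotes the leading coefficient of $F_{i}$. The $(2,1)$ and $(3,1)$ entries of every $F_{i}$ vanish by \eqref{A}, killing those entries of $\Lambda_{n}(D)$. The $(2,3)$ entry of $F_{i}$ is $r_{i}$, whose coefficient of $x^{i}$ vanishes precisely because of the strict inequality $\deg r_{i} \leq i-1$ proved above, so $\Lambda_{n}(D)_{2,3}=0$. The four remaining entries are finite sums of $[n]_{i}$ against scalars, hence polynomials in $n$, giving the desired $\gamma_{1},\dots,\gamma_{6}$.

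The main point requiring care is the degree propagation in the middle step: the strict bound $\deg r_{j} \leq j-1$ coming from the $(1,3)$ position is simultaneously what restores the degree bounds on $q_{j}$ and $s_{j}$ in the diagonal positions and what forces the $(2,3)$ entry of $\Lambda_{n}(D)$ to vanish. Once that bookkeeping is in place, the rest of the argument is essentially an application of the scalar classical theory.
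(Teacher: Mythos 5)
Your argument follows the paper's proof essentially step for step: Proposition \ref{5.3.} gives the shape \eqref{A} together with the symmetry of $\mathfrak{d}_{1},\mathfrak{d}_{2},\mathfrak{d}_{5}$; the degree bound of Proposition \ref{eigenvalue-prop} applied to the $(1,3)$ entry forces $\deg r_{j}\leq j-1$, which restores the bounds on the diagonal pieces; Proposition \ref{grados} then gives $\mathfrak{d}_{1}\in\mathcal{D}(w_{b})$ and $\mathfrak{d}_{2},\mathfrak{d}_{5}\in\mathcal{D}(w)$, hence membership in the $\mathcal{S}$-sets; and the eigenvalue formula yields the stated shape of $\Lambda_{n}(D)$, exactly as in the paper. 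The only harmless imprecision is treating the $\partial^{j}$-coefficient of $ax\mathfrak{d}_{3}$ as exactly $axr_{j}$ (commuting $x$ past $\partial^{j}$ produces lower-order correction terms), but a downward induction on $j$ gives the same conclusion $\deg r_{j}<j$, at the same level of detail as the paper's own argument.
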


\begin{proof}
    Since $\mathcal{D}(W)$ is a subalgebra of $\mathcal{F}_{R}(W)$, Proposition \ref{5.3.} implies that $D$ has the same expression as \eqref{A} for some differential operators $\mathfrak{d}_{j} \in \Omega[x]$, $j = 1,\ldots,5$.  
    Given that $D$ is $W$-symmetric, it follows that $\mathfrak{d}_{1}$ is $w_{b}$-symmetric, and $\mathfrak{d}_{2}$ and $\mathfrak{d}_{5}$ are $w$-symmetric, where $w_{b}(x) = e^{-x^{2}+2bx}$ and $w(x) = e^{-x^{2}}$.  

    By Proposition \ref{eigenvalue-prop}, each entry of $D$ is a differential operator in $\mathcal{D}_{1}(\Omega[x])$. Therefore, $\mathfrak{d}_{1}$ belongs to $\mathcal{D}_{1}(\Omega[x])$. Since the $(1,3)$ entry of $D$ implies that $ax\mathfrak{d}_{3} \in \mathcal{D}_{1}(\Omega[x])$, it follows that $\mathfrak{d}_{2}$ and $\mathfrak{d}_{5}$ belong to $\mathcal{D}_{1}(\Omega[x])$ as well.  

    Therefore, by Proposition \ref{grados}, we conclude that $\mathfrak{d}_{1} \in \mathcal{D}(w_{b})$, $\mathfrak{d}_{2}$ and $\mathfrak{d}_{5} \in \mathcal{D}(w)$. We write $\mathfrak{d}_{3} = \sum_{j=0}^{m}\partial^{j}f_{j}(x)$, with $f_{j}$ polynomials. Since $ax\mathfrak{d}_{3}$ belongs to $\mathcal{D}_{1}(\Omega[x])$, it follows that $\deg(f_{j})<j$. Thus, the final claim follows immediately from the formula for $\Lambda_{n}(D)$ given in Proposition \ref{eigenvalue-prop}.  
\end{proof}

The previous proposition establishes a strong result regarding the structure of the $W$-symmetric differential operators in $\mathcal{D}(W)$. We will use these results to prove the following theorem, one of the main results of this paper, which determines the algebra $\mathcal{D}(W)$ as a module over $\mathbb{C}[D_{1}]$. The proof consists of simplifying a given $W$-symmetric operator in $\mathcal{D}(W)$ by eliminating certain entries, leveraging the structural information provided by the previous proposition and the factored forms of the operators $D_1$ and $D_2$ introduced at the beginning of the section in \eqref{fact fact}. We will also use the fact that, since $D_1$ commutes with $D_2$, any operator of the form $p(D_1)D_2$ is $W$-symmetric for any polynomial $p\in \mathbb{R}[x]$.

\begin{thm}\label{algebra D(W)}
    The algebra $\mathcal{D}(W)$ is generated as a $\mathbb{C}[D_{1}]$-module by $\{I, D_2\}$.
\end{thm}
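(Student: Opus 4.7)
The plan is to pass to the conjugated picture via $T$. Given $D\in\mathcal{S}(W)$, by Proposition \ref{5.1.} the conjugate $\mathcal{B}_{D}:=T^{-1}DT$ has the block form $\begin{psmallmatrix}\mathfrak{d}_{1} & 0 & 0 \\ 0 & \mathfrak{d}_{2} & \mathfrak{d}_{3} \\ 0 & \mathfrak{d}_{4} & \mathfrak{d}_{5}\end{psmallmatrix}$, with diagonals $\mathfrak{d}_{1}=h_{1}(\delta_{b})$, $\mathfrak{d}_{2}=h_{2}(\delta_{0})$, $\mathfrak{d}_{5}=h_{5}(\delta_{0})$ for some real polynomials $h_{1},h_{2},h_{5}$. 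On the other hand, the factorizations \eqref{fact fact} show that for any polynomials $p,q$, the conjugate $T^{-1}(p(D_{1})+q(D_{1})D_{2})T$ has diagonal entries $p(\delta_{b})+\kappa q(\delta_{b})$, $p(\delta_{0}+2)+q(\delta_{0}+2)(\delta_{0}+\kappa)$, $p(\delta_{0})+2q(\delta_{0})$, together with the distinguished off-diagonal entries $-\tfrac{2}{c}q(\delta_{0}+2)\partial$ in position $(2,3)$ and $\tfrac{2}{c}q(\delta_{0})(\partial-2x)$ in position $(3,2)$, where $\kappa=\tfrac{2(c^{2}+2)}{c^{2}}$.

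Matching the $(1,1)$ and $(3,3)$ diagonals yields the polynomial equations $p(y)+\kappa q(y)=h_{1}(y)$ and $p(y)+2q(y)=h_{5}(y)$. Since $\kappa-2=\tfrac{4}{c^{2}}$ is a nonzero constant, these have the unique polynomial solution $q(y)=\tfrac{c^{2}}{4}(h_{1}(y)-h_{5}(y))$ and $p(y)=h_{5}(y)-2q(y)$. I then set $E:=p(D_{1})+q(D_{1})D_{2}$, which lies in $\mathcal{S}(W)$ since $D_{1}$ and $D_{2}$ commute and are both $W$-symmetric, and aim to prove $D=E$ by checking that every remaining entry of $\mathcal{B}_{D}-\mathcal{B}_{E}$ vanishes. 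The surviving conditions are the polynomial identity $h_{2}(y)=h_{1}(y+2)+\tfrac{c^{2}y}{4}(h_{1}(y+2)-h_{5}(y+2))$ for the $(2,2)$ entry, together with the prescribed forms $\mathfrak{d}_{3}=-\tfrac{2}{c}q(\delta_{0}+2)\partial$ and $\mathfrak{d}_{4}=\tfrac{2}{c}q(\delta_{0})(\partial-2x)$ for the $(2,3)$ and $(3,2)$ entries.

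These compatibility conditions must be extracted from the remaining structural constraints on $D$: the degree bounds on each $\partial^{j}$-coefficient in the entries of \eqref{AD} (coming from $D\in\mathcal{D}_{1}(\Omega[x])$) and the $w$-adjoint relation $\mathfrak{d}_{4}=w\mathfrak{d}_{3}^{\ast}w^{-1}$ imposed by the $\tilde{W}$-symmetry of $\mathcal{B}_{D}$. Expanding the $(1,2),(3,2),(3,3)$ entries of \eqref{AD} explicitly and using that $\mathfrak{d}_{1},\mathfrak{d}_{2},\mathfrak{d}_{5}$ are polynomials in $\delta_{b}$ and $\delta_{0}$, the leading-degree cancellations should force both the scalar identity on $h_{1},h_{2},h_{5}$ and the required narrow shape of $\mathfrak{d}_{3}$. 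Once $\mathcal{B}_{D}=\mathcal{B}_{E}$ has been verified, Proposition \ref{prop2.8-GT} gives $D=E$, and the decomposition $\mathcal{D}(W)=\mathcal{S}(W)\oplus i\mathcal{S}(W)$ upgrades this real version to the claimed $\mathbb{C}[D_{1}]$-module description.

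The principal difficulty I anticipate is this compatibility step, and specifically forcing $\mathfrak{d}_{3}$ into the narrow one-parameter family $-\tfrac{2}{c}q(\delta_{0}+2)\partial$. A priori $\mathfrak{d}_{3}$ is only required to satisfy $\deg f_{j}\leq j-1$, a much less restrictive condition; pinning down the prescribed form requires disentangling the interlocking degree constraints on the mixed entries $(1,2),(3,2),(3,3)$ of $D$, where $\mathfrak{d}_{3}$ couples to $\mathfrak{d}_{1},\mathfrak{d}_{2},\mathfrak{d}_{5}$ through left and right multiplications by $x$. This structural rigidity is the heart of the argument.
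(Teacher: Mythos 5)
Your reduction steps are sound and coincide with the paper's: conjugating by $T$, using Proposition \ref{5.1.} to see that $\mathfrak{d}_{1}=h_{1}(\delta_{b})$, $\mathfrak{d}_{5}=h_{5}(\delta_{0})$, and then choosing $p,q$ from the $(1,1)$ and $(3,3)$ entries (your formulas for $p,q$ agree, after unwinding, with the paper's combination $p_{1}(D_{1})+p_{2}(D_{1})(-\tfrac{c^{2}}{4})(D_{2}-\tfrac{2(c^{2}+2)}{c^{2}}I)$). But the decisive step is missing. You yourself flag it: after subtracting the candidate $E$, one must show that the remaining data — the $(2,2)$ compatibility identity for $h_{2}$ and, above all, the fact that $\mathfrak{d}_{3}$ (hence $\mathfrak{d}_{4}$, via $\mathfrak{d}_{4}=w\mathfrak{d}_{3}^{\ast}w^{-1}$) must vanish — is forced. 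Your proposed mechanism, ``leading-degree cancellations'' from the bounds $\deg f_{j}\leq j-1$ together with the adjoint relation, is only asserted (``should force''), not executed, and it is far from clear that these constraints alone pin $\mathfrak{d}_{3}$ down: the degree bounds leave an operator of arbitrary order with many free coefficients, and no equation linking $\mathfrak{d}_{3}$ to $h_{1},h_{2},h_{5}$ is actually derived. As written, the proof establishes only that \emph{if} the remainder vanishes then $D$ lies in the module, which is the whole content of the theorem.

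The paper closes exactly this gap by a different device, which you may want to compare with: instead of structural analysis of the differential-operator entries, it passes to the bispectral side. For the remainder operator $E$ (with the $(1,1)$ and $(3,3)$ conjugated diagonal entries already killed), Proposition \ref{5.1.} gives the triangular shape of $\Lambda_{n}(E)$ with polynomial entries $\gamma_{i}(n)$, and the identity $Q_{n}(x)T(x)\cdot\mathcal{B}_{E}=A_{n}\Lambda_{n}(E)A_{n}^{-1}\,Q_{n}(x)T(x)$, with the explicit Hermite-type polynomials $Q_{n}$ of Proposition \ref{Qn} and the leading coefficients $A_{n}$ of \eqref{An}, is read off entry by entry: the $(2,1)$, $(3,1)$ and $(1,1)$ entries force $\gamma_{3}=\gamma_{4}=\gamma_{1}=0$, the $(2,3)$ entry then forces $\mathfrak{d}_{3}=0$, hence $\gamma_{2}=\gamma_{5}=0$, so $\Lambda_{n}(E)=0$ for all $n$ and $E=0$ by the separation property (Proposition \ref{prop2.8-GT}) — which simultaneously disposes of $\mathfrak{d}_{2}$, $\mathfrak{d}_{3}$ and $\mathfrak{d}_{4}$ without ever confronting the degree bookkeeping you anticipate as the ``heart of the argument.'' To complete your proof you would either have to carry out that bookkeeping in full (including deriving the $(2,2)$ identity for $h_{2}$ from first principles), or import an argument of the paper's eigenvalue type; in its current form the proposal has a genuine gap at precisely the step that makes the theorem nontrivial.
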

\begin{proof}
    Given a $W$-symmetric differential operator $D \in \mathcal{D}(W)$, by Proposition \ref{5.1.}, we know that $D$ has the form given in \eqref{AD}, with $\mathfrak{d}_{1}$ an operator in $\mathcal{S}(e^{-x^{2}+2bx})$. Therefore, $\mathfrak{d}_{1}$ is equal to  $p_{1}(\delta_{b})$ for some polynomial $p_{1} \in \mathbb{R}[x]$, and $\delta_{b} = \partial^{2} + \partial (-2x+2b)$. Then, by the expression of $D_{1}$ given in \eqref{fact fact}, we have that the $W$-symmetric differential operator $D-p_{1}(D_{1})$ belongs to $\mathcal{D}(W)$, and has the form given in \eqref{AD} with $\mathfrak{d}_{1} = 0$ and $\mathfrak{d}_{5} = p_{2}(\delta)$ for some polynomial $p_{2}\in \mathbb{R}[x]$, with $\delta = \partial^{2} + \partial (-2x)$. Now, we consider the $W$-symmetric operator $E = D-p_{1}(D_{1})-p_{2}(D_{1})(-\frac{c^{2}}{4})(D_{2}-\frac{2(c^{2}+2)}{c^{2}}I)\in \mathcal{D}(W)$. It follows that $E$ has the form given in \eqref{AD} with $\mathfrak{d}_{1}=0$ and $\mathfrak{d}_{5} = 0$, 
    $$E = T(x) \begin{pmatrix} 0 & 0 & 0 \\ 0 & \mathfrak{d}_{2} & \mathfrak{d}_{3} \\ 0 & \mathfrak{d}_{4} & 0\end{pmatrix}T(x)^{-1} =\begin{pmatrix} 0 & ax\mathfrak{d}_{2} - acx\mathfrak{d}_{3}x & ax\mathfrak{d}_{3} \\ 0 & \mathfrak{d}_{2} - c\mathfrak{d}_{3}x & \mathfrak{d}_{3} \\ 0 & cx\mathfrak{d}_{2} + \mathfrak{d}_{4} - c^{2}c\mathfrak{d}_{3}x & cx\mathfrak{d}_{3}\end{pmatrix}.$$
    The eigenvalues of $E$ for the monic orthogonal polynomials are given by
    $$\Lambda_{n}(E) = \begin{pmatrix} 0 & \gamma_{1}(n) & \gamma_{2}(n) \\ 0 & \gamma_{3}(n) & 0 \\0 & \gamma_{4}(n) & \gamma_{5}(n)\end{pmatrix},$$
    where $\gamma_{1},\gamma_{2},\gamma_{3},\gamma_{4},\gamma_{5}$ are polynomials.
    Then, we have that $A_{n}^{-1}Q_{n}(x) \cdot E = \Lambda_{n}(E)A_{n}^{-1}Q_{n}(x)$, where $Q_{n}$ is the sequence of orthogonal polynomials for $W$ given in Proposition \ref{Qn}, and $A_{n}$ is the sequence given in \eqref{An}. This implies that the following equation holds
    \begin{equation*}
        Q_{n}(x)T(x)\cdot \begin{pmatrix} 0 & 0 & 0 \\ 0 & \mathfrak{d}_{2} & \mathfrak{d}_{3} \\ 0 & \mathfrak{d}_{4} & 0 \end{pmatrix} = A_{n}\begin{pmatrix} 0 & \gamma_{1}(n) & \gamma_{2}(n) \\ 0 & \gamma_{3}(n) & 0 \\ 0 & \gamma_{4}(n) & \gamma_{5}(n) \end{pmatrix}A_{n}^{-1}Q_{n}(x)T(x).
    \end{equation*}
From the entry $(2,1)$ and $(3,1)$ of the above equation, we obtain that $\gamma_{3}(n) = 0$ and $\gamma_{4}(n) = 0$. Now, from the entry $(1,1)$, we obtain that $\gamma_{1}(n) = 0$. Then, from the entry $(2,3)$ we have that $\mathfrak{d}_{3} = 0$, which implies that $\gamma_{2}(n) = 0$ and $\gamma_{5}(n) = 0$. Thus, $\Lambda_{n}(E) = 0$ and we conclude that $D = p_{1}(D_{1}) + p_{2}(D_{1})(-\frac{c^{2}}{4})(D_{2}-\frac{2(c^{2}+2)}{c^{2}}I)$. 
\end{proof}
\begin{remark}
As a direct consequence of the above theorem, the algebra $\mathcal{D}(W)$ is also generated as an algebra by $\{I, D_{1}, D_{2} \}$. And the generators satisfy the relations given in Proposition \ref{relations}. In particular, $\mathcal{D}(W)$ is a commutative algebra.
\end{remark}
Now, we are in a position to establish the following main result.

\begin{thm}\label{no darboux}
    The weight matrix $W$ cannot be obtained as a Darboux transformation of a direct sum of classical scalar weights.
\end{thm}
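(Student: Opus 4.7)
The plan is to apply the Casper--Yakimov classification (Theorem \ref{clas1}) in contrapositive form. The operator $D_{1}$ from \eqref{D1D2} is $W$-symmetric of second order with leading coefficient $I$, and $I\cdot W(x)=W(x)$ is positive definite on $\mathbb{R}$, so the hypotheses of Theorem \ref{clas1} are satisfied. It therefore suffices to prove that $\mathcal{D}(W)$ is \emph{not full}; in fact I will establish the stronger statement that no three nonzero $W$-symmetric operators $E_{1},E_{2},E_{3}\in\mathcal{D}(W)$ can satisfy $E_{i}E_{j}=0$ for all $i\neq j$.

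By Theorem \ref{algebra D(W)}, every $W$-symmetric element can be written as $E=p(D_{1})+q(D_{1})D_{2}$ for real polynomials $p,q$. The matrices $\Lambda_{n}(D_{1})$ and $\Lambda_{n}(D_{2})$ recorded right after \eqref{D1D2} are upper triangular and commute (as $\mathcal{D}(W)$ is commutative by the remark after Theorem \ref{algebra D(W)}), so $\Lambda_{n}(E)$ is upper triangular with diagonal
\[
\bigl(\,p(-2n)+\alpha\, q(-2n),\ \ p(-2n+2)+\alpha\, q(-2n+2),\ \ p(-2n)-2n\, q(-2n)\,\bigr),
\]
where $\alpha=2(c^{2}+2)/c^{2}$.

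Suppose such pairwise-annihilating nonzero operators $E_{i}=p_{i}(D_{1})+q_{i}(D_{1})D_{2}$ existed. The $(k,k)$-entry of a product of upper triangular matrices is the product of the $(k,k)$-entries, so $E_{i}E_{j}=0$ forces the product of the $k$-th diagonal polynomials of $\Lambda_{n}(E_{i})$ and $\Lambda_{n}(E_{j})$ to vanish identically in $n$; since $\mathbb{R}[n]$ is a domain, for each fixed $k\in\{1,2,3\}$ at most one of the three values $\Lambda_{n}(E_{i})_{kk}$ ($i=1,2,3$) can be nonzero as a polynomial in $n$. The key observation is that the first and second diagonals vanish identically under the \emph{same} polynomial condition $p_{i}(x)+\alpha\, q_{i}(x)=0$ in $\mathbb{R}[x]$, whereas the third vanishes identically iff $p_{i}(x)+x\, q_{i}(x)=0$. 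Thus at least two of the $E_{i}$ satisfy the first identity and at least two satisfy the second, so by pigeonhole some $E_{i}$ satisfies both; subtracting yields $(x-\alpha)\, q_{i}(x)=0$ in $\mathbb{R}[x]$, which forces $q_{i}=0$ and then $p_{i}=0$. By Proposition \ref{prop2.8-GT} this gives $E_{i}=0$, contradicting $E_{i}\neq 0$.

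The delicate point is precisely the coincidence of the $(1,1)$ and $(2,2)$ vanishing conditions (they differ only by the translation $n\mapsto n-1$), which effectively collapses the three orthogonal directions available to just two. This is the algebraic reflection of the fact that $\mathcal{D}(W)\cong\mathbb{C}[x,y]/((y-x)(y-\alpha))$ is the coordinate ring of two lines meeting at a single point, so its spectrum has only two irreducible components, precluding the triple decomposition that fullness demands for a weight of size $3$.
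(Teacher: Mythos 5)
Your proposal is correct, and it follows the paper's overall skeleton — invoke Theorem \ref{clas1} (the paper leaves the hypothesis check implicit; you verify it explicitly with $D_1$, whose leading coefficient is $I$) to reduce the statement to non-fullness, and then use Theorem \ref{algebra D(W)} to rule out three nonzero pairwise-annihilating $W$-symmetric operators — but the key step is carried out by a genuinely different mechanism. The paper argues on the \emph{symbol} side: from $E=p(D_1)+q(D_1)D_2$ the leading coefficient of any nonzero $W$-symmetric operator is $\alpha I+\beta N(x)$ with $N(x)$ the idempotent leading coefficient of $D_2$, and since the top-order coefficient of a product is the product of leading coefficients, pairwise annihilation of three operators forces some $E_i$ to have $\alpha_i=\beta_i=0$, hence $E_i=0$. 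You argue on the \emph{spectral} side: using Proposition \ref{prop2.8-GT} and the explicit eigenvalues after \eqref{D1D2}, the matrices $\Lambda_n(E_i)$ are upper triangular with diagonal polynomials in $n$, pairwise annihilation kills products of diagonals, and the coincidence of the $(1,1)$ and $(2,2)$ vanishing conditions plus pigeonhole forces $(x-\alpha)q_i=0$, so $p_i=q_i=0$. Both arguments are two faces of the same fact, made explicit in Remark \ref{geometric}, that $\mathcal{D}(W)\cong\mathbb{C}[u,v]/(uv)$ has only two minimal primes, which your closing paragraph rediscovers. One small imprecision: Theorem \ref{algebra D(W)} gives generation as a $\mathbb{C}[D_1]$-module, so a priori $p,q\in\mathbb{C}[x]$; the realness you assert for $W$-symmetric operators does follow from the paper's proof (via $\mathcal{S}(w_b)=\mathbb{R}[\delta_b]$), but your argument never actually needs it, since $\mathbb{C}[n]$ is also a domain.
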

\begin{proof}
    We have that $W$ is a Darboux transformation of a direct sum of classical scalar weights if and only if the algebra $\mathcal{D}(W)$ is full. This is equivalent to the existence of nonzero $W$-symmetric differential operators $E_1, E_2$, and $E_3$ such that $E_i E_j = 0$ for all $i \neq j$, and $E_1 + E_2 + E_3$ is an element of the center of $\mathcal{D}(W)$ that is not a zero divisor. 

By Theorem \ref{algebra D(W)}, the leading coefficient of any $W$-symmetric differential operator $E$ has the form  
\[
\begin{pmatrix} 
\alpha & \beta a x & 0 \\ 
0 & \alpha + \beta & 0 \\ 
0 & \beta c x & \alpha 
\end{pmatrix},
\]
for some $\alpha, \beta \in \mathbb{R}$. 

Thus, the condition $E_i E_j = 0$ for all $i \neq j$ holds if and only if at least one of the operators $E_i$ is zero, which implies that $\mathcal{D}(W)$ is not full.
\end{proof}

\begin{remark}\label{geometric}
The proof of Theorem \ref{no darboux} can be rephrased in geometric terms. 
From the relations of the generators of $\mathcal{D}(W)$ in Proposition \ref{relations} and by the first isomorphism theorem, we obtain that $\mathcal{D}(W)$ is isomorphic to the quotient ring $R = \mathbb{C}[u,v]/(uv)$. 
Here the isomorphism is the one induced by
\[
\varphi:\mathbb{C}[u,v]\longrightarrow \mathcal{D}(W), 
\quad u \mapsto D_{1}-D_{2}, \quad v \mapsto D_{2} - \tfrac{2(c^{2}+2)}{c^{2}}.
\]
The existence of the operators $E_{1},E_{2},$ and $E_{3}$ as in the proof of Theorem \ref{no darboux} would then imply that $R$ admits at least three minimal prime ideals, whereas $R$ has exactly two. 
\end{remark}

\section*{Acknowledgments}

The author is grateful to the anonymous referee for improving the paper by suggesting a simplification of the proof of Proposition \ref{rfw} and a geometric interpretation of the proof of Theorem \ref{no darboux}, which has been included in Remark \ref{geometric}.

\

\bibliographystyle{amsplain} % Estilo de bibliografía
\bibliography{referencias} % Archivo de bibliografía (sin la extensión

\end{document}